\newtheorem{theorem}{Theorem}[section] 
\newtheorem*{theorem*}{Theorem}
\newtheorem{lemma}[theorem]{Lemma}
\newtheorem*{lemma*}{Lemma}
\newtheorem{corollary}[theorem]{Corollary}
\newtheorem*{corollary*}{Corollary}
\newtheorem{proposition}[theorem]{Proposition}
\newtheorem*{proposition*}{Proposition}
\newtheorem{remark}[theorem]{Remark}
\newtheorem{question}[theorem]{Question}
\newtheorem{definition}[theorem]{Definition}
\newtheorem{example}[theorem]{Example}
\newcommand{\bgl}{\begin{equation}} 
\newcommand{\egl}{\end{equation}}
\newcommand{\bgloz}{\begin{equation*}} 
\newcommand{\egloz}{\end{equation*}}
\newcommand{\bgln}{\begin{eqnarray}} 
\newcommand{\egln}{\end{eqnarray}}
\newcommand{\bglnoz}{\begin{eqnarray*}} 
\newcommand{\eglnoz}{\end{eqnarray*}}
\newcommand{\btheo}{\begin{theorem}}
\newcommand{\etheo}{\end{theorem}}
\newcommand{\btheooz}{\begin{theorem*}}
\newcommand{\etheooz}{\end{theorem*}}
\newcommand{\blemma}{\begin{lemma}}
\newcommand{\elemma}{\end{lemma}}
\newcommand{\blemmaoz}{\begin{lemma*}}
\newcommand{\elemmaoz}{\end{lemma*}}
\newcommand{\bproof}{\begin{proof}}
\newcommand{\eproof}{\end{proof}}
\newcommand{\bbew}{\begin{beweis}}
\newcommand{\ebew}{\end{beweis}}
\newcommand{\bremark}{\begin{remark}\em}
\newcommand{\eremark}{\end{remark}}
\newcommand{\bquestion}{\begin{question}\em}
\newcommand{\equestion}{\end{question}}
\newcommand{\bdefin}{\begin{definition}}
\newcommand{\edefin}{\end{definition}}
\newcommand{\bprop}{\begin{proposition}}
\newcommand{\eprop}{\end{proposition}}
\newcommand{\bpropoz}{\begin{proposition*}}
\newcommand{\epropoz}{\end{proposition*}}
\newcommand{\bcor}{\begin{corollary}}
\newcommand{\ecor}{\end{corollary}}
\newcommand{\bcoroz}{\begin{corollary*}}
\newcommand{\ecoroz}{\end{corollary*}}
\newcommand{\bfa}{\begin{cases}} 
\newcommand{\efa}{\end{cases}}
\newcommand{\bexample}{\begin{example}\em}
\newcommand{\eexample}{\end{example}}
\newcommand{\cA}{\mathcal A}
\newcommand{\cD}{\mathcal D}
\newcommand{\cG}{\mathcal G}
\newcommand{\cK}{\mathcal K}
\newcommand{\cM}{\mathcal M}
\newcommand{\cN}{\mathcal N}
\newcommand{\cO}{\mathcal O}
\newcommand{\cU}{\mathcal U}
\def\Cz{\mathbb{C}}
\def\Nz{\mathbb{N}}
\def\Rz{\mathbb{R}}
\def\Tz{\mathbb{T}}
\def\Zz{\mathbb{Z}}
\def\1z{\mathbb{1}}
\newcommand{\an}[1]{``#1''} 
\newcommand{\ti}{\tilde}
\newcommand{\lori}{\longrightarrow}
\newcommand{\ma}{\mapsto} 
\newcommand\onto{\twoheadrightarrow} 
\newcommand\into{\hookrightarrow} 
\newcommand{\ve}{\varepsilon}
\def\SEMI{\mbox{$\times\kern-2pt\vrule height5pt width.6pt \kern3pt $}}
\newcommand{\Hom}{{\rm Hom}\,}
\newcommand{\Aut}{{\rm Aut}\,}
\newcommand{\Spec}{{\rm Spec\,}} 
\newcommand{\id}{{\rm id}}
\newcommand{\Ad}{{\rm Ad\,}}
\newcommand{\ev}{\operatorname{ev}} 
\newcommand{\lspan}{{\rm span}} 
\newcommand{\abs}[1]{\left|#1\right|} 
\newcommand{\norm}[1]{\left\|#1\right\|} 
\newcommand{\defeq}{\mathrel{:=}} 
\newcommand{\dop}{\text{: }} 
\newcommand{\supp}{{\rm supp}\,}
\newcommand{\suppc}{{\rm supp}}
\newcommand{\dom}{{\rm dom}\,}
\newcommand{\ran}{{\rm ran}\,}
\newcommand{\lge}{\left\{} 
\newcommand{\rge}{\right\}} 
\newcommand{\lru}{\left(} 
\newcommand{\rru}{\right)} 
\newcommand{\leck}{\left[} 
\newcommand{\reck}{\right]} 
\newcommand{\rukl}[1]{\lru #1 \rru} 
\newcommand{\eckl}[1]{\leck #1 \reck} 
\newcommand{\gekl}[1]{\lge #1 \rge} 
\newcommand{\menge}[2]{\gekl{ #1 \dop #2 }} 
\newcommand{\Ext}{{\rm Ext}\,}
\title{Cartan subalgebras and the UCT problem}
\author{Sel\c{c}uk Barlak}
\address{Department of Mathematics and Computer Science\\
University of Southern Denmark\\
Campusvej 55\\
DK-5230 Odense M\\
Denmark}
\email{barlak@imada.sdu.dk}
\author{Xin Li}
\address{School of Mathematical Sciences\\
Queen Mary University of London\\
Mile End Road\\
London E1 4NS}
\email{xin.li@qmul.ac.uk}
\thanks{The first named author is supported by SFB 878 \emph{Groups, Geometry and Actions}, GIF Grant 1137-30.6/2011, ERC AdG 267079, and the Villum Fonden project grant `Local and global
structures of groups and their algebras' (2014–-2018).}
\thanks{The second named author is supported by EPSRC grant EP/M009718/1.}
\begin{document}

\begin{abstract}
We show that a separable, nuclear C*-algebra satisfies the UCT if it has a Cartan subalgebra. Furthermore, we prove that the UCT is closed under crossed products by group actions which respect Cartan subalgebras. This observation allows us to deduce, among other things, that a crossed product $\cO_2\rtimes_\alpha \Zz_p $ satisfies the UCT if there is some automorphism $\gamma$ of $\cO_2$ with the property that $\gamma(\cD_2)\subseteq \cO_2\rtimes_\alpha \Zz_p$ is regular, where $\cD_2$ denotes the canonical masa of $\cO_2$. We prove that this condition is automatic if $\gamma(\cD_2)\subseteq \cO_2\rtimes_\alpha \Zz_p$ is not a masa or $\alpha(\gamma(\cD_2))$ is inner conjugate to $\gamma(\cD_2)$. Finally, we relate the UCT problem for separable, nuclear, $M_{2^\infty}$-absorbing C*-algebras to Cartan subalgebras and order two automorphisms of $\cO_2$.
\end{abstract}

\maketitle



\section{Introduction}

As defined by Rosenberg and Schochet in \cite{RSch}, a separable C*-algebra $A$ is said to satisfy the universal coefficient theorem (UCT) if for every separable C*-algebra $A'$, the following sequence is exact
$$
 0 \to \Ext(K_*(A),K_{*-1}(A')) \to KK_*(A,A') \to\Hom(K_*(A),K_*(A')) \to 0,
$$
where the right hand map is the natural one and the left hand map is the inverse of a map that is always defined. They showed that a separable C*-algebra satisfies the UCT if and only if it is $KK$-equivalent to a commutative C*-algebra. The class of separable, nuclear C*-algebras satisfying the UCT is called the bootstrap category, denoted $\cN$, and can alternatively be characterized as the smallest class of separable, nuclear C*-algebras that contains $\Cz$ and is closed under countable inductive limits, the two out of three property for extensions, and $KK$-equivalences, see \cite{Bla}.

Many natural constructions of C*-algebras are known to preserve the bootstrap category. For example, it follows from work of Cuntz \cite{Cu} that $\cN$ is stable under taking crossed products by $\Zz$. Moreover, Fack and Skandalis's result that the Connes' Thom isomorphism is a $KK$-equivalence yields that the bootstrap category is also stable under taking crossed by $\Rz$, see \cite{FS}. In his remarkable paper \cite{Tu}, Tu has proven that all C*-algebras associated with Hausdorff, locally compact, second countable, amenable groupoids satisfy the UCT. This is a huge class of C*-algebras and includes for example all transformation group C*-algebras associated with countable groups acting amenably on locally compact, second countable spaces. Whereas Skandalis has given in \cite{S} an example of an exact, non-nuclear C*-algebra not satisfying the UCT, it remains a major open question whether all separable, nuclear C*-algebras satisfy the UCT. This question is often referred to as the UCT problem. 

Like for groups, a twisted groupoid $(G,\Sigma)$ of two topological groupoids $G$ and $\Sigma$ is a central groupoid extension
$$
\Tz \times G^{(0)} \rightarrowtail \Sigma \twoheadrightarrow G.
$$
Given a twisted groupoid $(G,\Sigma)$, and provided that $G$ is Hausdorff, locally compact, second countable and possesses a Haar system, one can form the reduced C*-algebra $C_r^*(G,\Sigma)$, see for example \cite{Ku1,Ku2,R}. Our first main result is the following:
\btheo \label{intro TwistedGPD-UCT}
Assume that $(G,\Sigma)$ is a twisted groupoid with $G$ {\'e}tale, Hausdorff, locally compact, second countable. If $C^*_r(G,\Sigma)$ is nuclear, then it satisfies the UCT.
\etheo
The proof of this theorem relies heavily on Tu's results and techniques. To make these applicable to our setup, we make use of recent results by Takeishi \cite{Ta} and by van Erp and Williams \cite{EW}.

Restricting to topologically principal groupoids, our result also covers the case of separable, nuclear C*-algebras admitting a Cartan subalgebra in the sense of Renault, see \cite{R}. Recall that a Cartan subalgebra $B$ of a C*-algebra $A$ is a regular masa that contains an approximate identity for $A$ and admits a faithful conditional expectation $A \onto B$, see Section \ref{Prelim}. Renault's remarkable result in \cite{R} states that up to isomorphism every Cartan pair $(A,B)$ arises as the reduced C*-algebra of a twisted groupoid $(G,\Sigma)$ with $G$ {\'e}tale, Hausdorff, locally compact, second countable and topologically principal, and with the Cartan subalgebra being the algebra of continuous functions vanishing at infinity on the unit space $G^{(0)}$. Conversely, every such pair $(C^*_r(G,\Sigma),C_0(G^{(0)}))$ is shown to be a Cartan pair. As a consequence, we obtain:

\bcor \label{intro Cartan-UCT}
Let $A$ be a separable and nuclear C*-algebra. If $A$ has a Cartan subalgebra, then $A$ satisfies the UCT.
\ecor

In combination with results by Katsura \cite{Kat} and Yeend \cite{Y1, Y2}, Corollary~\ref{intro Cartan-UCT} in particular implies that a Kirchberg algebra satisfies the UCT if and only if it admits a Cartan subalgebra, see Remark~\ref{UCT <-> Kirchberg Cartan}.

The aim of the second part of this paper is to provide a first step towards a better understanding of the connection between Cartan subalgebras and the UCT problem. Over the years, several reformulations of the UCT problem have been established. As outlined in \cite[23.15.12]{Bla}, the UCT problem has a positive answer if and only if $\cN$ is closed under crossed products by the circle group $\Tz$. A deep result of Kirchberg  states that every separable, nuclear C*-algebra is $KK$-equivalent to a Kirchberg algebra, see \cite[Theorem I]{Kir}. Building upon this result, one can show that the UCT problem reduces to the question whether every crossed product of $\cO_2$ by an outer $\Zz_p$-action, with $p$ prime, satisfies the UCT, see \cite[Theorem 4.17]{BS}. 

Having in mind the characterization of  the UCT problem in terms of finite cyclic group actions on $\cO_2$ and considering Theorem \ref{intro TwistedGPD-UCT}, it is natural to ask the following question:

\bquestion
Let $(A,B)$ be a Cartan pair with $A$ separable and nuclear, and let $\alpha \in \Aut(A)$ be of finite order, say $n$. When does $A\rtimes_\alpha \Zz_n$ satisfy the UCT?
\equestion

This question is addressed in the second part of the paper and some partial answer will be given. As the formulation of our main result in this direction would be rather technical, we restrict at this point to reduced C*-algebras associated with minimal and purely infinite groupoids in the sense of Matui, see \cite{Ma}.

\btheo \label{intro UCTregular}
Let $G$ be an {\'e}tale, Hausdorff, locally compact, second countable, topologically principal groupoid that is minimal, purely infinite and amenable, and that has the property that $G^{(0)}$ is homeomorphic to the Cantor set. Let $p$ be a prime number and $\alpha:\Zz_p\curvearrowright C_r^*(G)$ an action. Suppose that there exists an automorphism $\gamma\in \Aut(C_r^*(G))$ such that $
\gamma(C(G^{(0)}))$ is a regular sub-$C^*$-algebra of $C_r^*(G)\rtimes_\alpha \Zz_p$. Then $C_r^*(G)\rtimes_\alpha \Zz_p$ satisfies the UCT.
\etheo

Theorem \ref{intro UCTregular} also applies to the Cartan pair $(\cO_2,\cD_2)$, where $\cD_2$ denotes the canonical masa of $\cO_2$, and therefore relates to the UCT problem. If $\gamma(C(G^{(0)})) \subseteq C_r^*(G)\rtimes_\alpha \Zz_p$ is a masa, then the claim follows from Corollary \ref{intro Cartan-UCT}. In order to handle the case that $\gamma(C(G^{(0)})) \subseteq C_r^*(G) \rtimes_\alpha \Zz_p$ is not a masa, we make use of the following two results, which should be of independent interest. The first one holds in greater generality than stated here.

\btheo \label{intro characterizationDiagonalNotMasa}
Let $G$ be an {\'e}tale, Hausdorff, locally compact, second countable, topologically principal groupoid that is minimal and purely infinite, and that has the property that $G^{(0)}$ is homeomorphic to the Cantor set. Let $p$ be a prime number and $\alpha:\Zz_p\curvearrowright A$ an action. Then the following are equivalent:
\begin{enumerate}
 \item[(i)]
  $C(G^{(0)}) \subseteq C^*_r(G)\rtimes_\alpha \Zz_p$ is not a masa.
  \item[(ii)]
   $\alpha$ is exterior equivalent to a $\Zz_p$-action fixing $C(G^{(0)})$ pointwise.
\end{enumerate}
\etheo

\bprop \label{intro Gamma.B=B}
Let $A$ be a separable C*-algebra and assume that $B \subseteq A$ is a Cartan subalgebra. Let $\Gamma$ be a countable group acting on $A$ such that $B$ is globally invariant under this action.

If $A \rtimes_r \Gamma$ is nuclear, then $A \rtimes_r \Gamma$ satisfies the UCT.
\eprop

The proof of the second result again uses Renault's characterization of $A$ as a reduced twisted groupoid C*-algebra $C^*_r(G,\Sigma)$. The $\Gamma$-action on $A$ gives rise to actions on $G$ and $\Sigma$, and we identify the reduced crossed product $A \rtimes_r \Gamma$ as the reduced C*-algebra of the twisted semidirect product groupoid $(G \ltimes \Gamma,\Sigma \ltimes \Gamma)$. Proposition \ref{intro Gamma.B=B} then follows from Theorem \ref{intro TwistedGPD-UCT}.

In the special case of outer strongly approximately inner $\Zz_2$-actions on $\cO_2$ in the sense of \cite[Definition~3.6]{Iz}, we obtain an equivalent characterization for the UCT of the associated crossed products:
\btheo \label{intro char str appr inner UCT}
Let $\beta: \: \Zz_2 \curvearrowright \cO_2$ be outer strongly approximately inner. $\cO_2 \rtimes_{\beta} \Zz_2$ satisfies the UCT if and only if there exists a Cartan subalgebra in $\cO_2$ which is fixed by $\beta$ pointwise.
\etheo

Combining Proposition~\ref{intro Gamma.B=B} and Theorem~\ref{intro char str appr inner UCT} with Izumi's classification of outer strongly approximately inner $\Zz_2$-actions on $\cO_2$ (see \cite[Theorem~4.8]{Iz}), we provide the following new characterization of the UCT problem for separable, nuclear C*-algebras that are $KK$-equivalent to their $M_{2^\infty}$-stabilization:

\btheo \label{char UCT problem}
The following statements are equivalent:
\begin{enumerate}
\item[(i)] Every separable, nuclear C*-algebra $A$ that is $KK$-equivalent to $A \otimes M_{2^\infty}$ satisfies the UCT.
\item[(ii)] For every unital, $M_{2^\infty}$-absorbing Kirchberg algebra $A$ in Cuntz standard form, there exists a strongly approximately inner action $\beta:\Zz_2 \curvearrowright \cO_2$ and a Cartan subalgebra $B \subseteq \cO_2$ such that $\cO_2 \rtimes_\beta \Zz_2 \cong A$ and $\beta$ fixes $B$ pointwise.
\item[(iii)] Every outer strongly approximately inner $\Zz_2$-action on $\cO_2$ fixes some Cartan subalgebra $B \subseteq \cO_2$ pointwise.
\item[(iv)] Every outer strongly approximately inner $\Zz_2$-action on $\cO_2$ fixes some Cartan subalgebra $B \subseteq \cO_2$ globally.
\end{enumerate}
\etheo

The paper is organized as follows. In Section~\ref{Prelim}, we mainly recall Renault's construction of a twisted groupoid associated with a Cartan pair. In Section~\ref{The UCT for nuclear C*-algebras with a Cartan subalgebra}, we prove Theorem~\ref{intro TwistedGPD-UCT}, Corollary~\ref{intro Cartan-UCT} and Proposition~\ref{intro Gamma.B=B}. In Section~\ref{Masas in Crossed Products}, we first make some observations about the interplay of masas and automorphisms of C*-algebras with an emphasis on masas with totally disconnected spectrum. We then prove (generalizations of) Theorem~\ref{intro characterizationDiagonalNotMasa} and \ref{intro UCTregular}. In Section~\ref{Section UCT}, we prove Theorem~\ref{intro char str appr inner UCT} and Theorem~\ref{char UCT problem}.

\section{Preliminaries}
\label{Prelim}

\bdefin[{\cite[Definition~5.1]{R}}]
\label{CartanSubalgebra}
A sub-C*-algebra $B$ of a C*-algebra $A$ is called a Cartan subalgebra if
\begin{enumerate}
\item[(i)] $B$ contains an approximate identity of $A$;
\item[(ii)] $B$ is maximal abelian;
\item[(iii)] $B$ is regular, i.e., $N_A(B) \defeq \menge{n \in A}{n B n^* \subseteq B \ \text{and} \ n^* B n \subseteq B}$ generates $A$ as a C*-algebra;
\item[(iv)] there exists a faithful conditional expectation $P:A \onto B$.
\end{enumerate}

A pair $(A,B)$, where $B$ is a Cartan subalgebra of a C*-algebra $A$, is called a Cartan pair.
\edefin

Renault shows that every Cartan pair $(A,B)$, with $A$ separable, arises -- up to isomorphism -- from a twisted {\'e}tale Hausdorff locally compact second countable topologically principal groupoid. Let us present a few more details: 
\bdefin
A twisted groupoid $(G,\Sigma)$ consists of two topological groupoids $G$ and $\Sigma$, together with a central groupoid extension
$$
\Tz \times G^{(0)} \rightarrowtail \Sigma \twoheadrightarrow G
$$
where $\Tz$ is the circle group.
\edefin
In the following, we always denote by $r$ and $s$ the range and source map of a given groupoid.

\bremark
For later purposes, let us briefly explain the construction of the reduced C*-algebra $C^*_r(G,\Sigma)$ of a twisted groupoid $(G,\Sigma)$, under the assumption that $G$ is locally compact, Hausdorff, second countable and possesses a Haar system $\menge{\lambda_x}{x\in G^{(0)}}$. Details can be found at the beginning of \cite[\S~4]{R}. On
$$
C_c(G,\Sigma) = \menge{f\in C_c(\Sigma,\Cz)}{f(z\sigma)=f(\sigma)\bar{z} \ {\rm for} \ {\rm all} \ \sigma \in \Sigma,\ z \in \Tz},
$$
define convolution and involution via the formulas
$$
f * g (\sigma) = \int f(\sigma\tau^{-1})g(\tau)\mathrm d\lambda_{s(\sigma)}(\dot{\tau}) \ \text{and}\ f^*(\sigma) = \overline{f(\sigma^{-1})}.
$$
Here $\dot{\tau}\in G$ denotes the image of $\tau\in \Sigma$ under the surjection $\Sigma \onto G$. With these operations, $C_c(G,\Sigma)$ turns into a $*$-algebra. For $x\in G^{(0)}$, consider the Hilbert space
$$
\mathscr{H}_x = \menge{\xi: \: \Sigma_x \to \Cz}{\xi(z\sigma) = \xi(\sigma)\bar{z}, \, [\dot{\sigma} \mapsto |\xi(\dot{\sigma})|^2] \text{ measurable}, \, \int \abs{\xi(\dot{\sigma})}^2 \mathrm d \lambda_x(\dot{\sigma}) < \infty},
$$
and let $\pi_x$ denote the $*$-representation of $C_c(G,\Sigma)$ on $\mathscr{H}_x$ given by
$$
\pi_x(f)\xi(\sigma) = \int f(\sigma\tau^{-1})\xi(\tau) \mathrm d\lambda_x(\dot{\tau}).
$$
Now, $C^*_r(G,\Sigma)$ is the completion of $C_c(G,\Sigma)$ with respect to the norm $\|f\| = \sup\limits_{x \in G^{(0)}} \|\pi_x(f)\|$.
\eremark

\bdefin
A topological groupoid $G$ is {\'e}tale if the range and source maps are local homeomorphisms from $G$ onto $G^{(0)}$.
\edefin
If $G$ is {\'e}tale, then there always exists a Haar system (counting measures) on $G$, and there is a canonical embedding $C_0(G^{(0)}) \into C^*_r(G,\Sigma)$.
\bdefin
A topological groupoid $G$ is topologically principal if the set of points of $G^{(0)}$ with trivial isotropy is dense in $G^{(0)}$, where $x \in G^{(0)}$ is said to have trivial isotropy if $\menge{\gamma \in G}{r(\gamma) = s(\gamma) = x} = \gekl{x}$.
\edefin

\bdefin
A twisted groupoid $(G,\Sigma)$ is called a twisted {\'e}tale Hausdorff locally compact second countable topologically principal groupoid if $G$ is {\'e}tale, Hausdorff, locally compact, second countable and topologically principal.
\edefin

With these notations, we are now ready to state Renault's theorem:
\btheo[{\cite[Theorem~5.2 and Theorem~5.9]{R}}]
\label{Renault}
Cartan pairs $(A,B)$, where $A$ is a separable C*-algebra, are precisely of the form $(C^*_r(G,\Sigma),C_0(G^{(0)}))$, where $(G,\Sigma)$ is a twisted {\'e}tale Hausdorff locally compact second countable topologically principal groupoid.
\etheo

\bremark
\label{A,B-->GPD}
For later purposes, let us briefly explain how $(G,\Sigma)$ is constructed out of $(A,B)$. Set $X \defeq \Spec(B)$. By \cite[Proposition~4.7]{R}, for every $n \in N_A(B)$ there exists a partial homeomorphism $\alpha_n: \: \dom(n) \to \ran(n)$, where $\dom(n) = \menge{x \in X}{n^*n(x) > 0}$ and $\ran(n) = \menge{x \in X}{nn^*(x) > 0}$, such that $n^*bn(x) = b(\alpha_n(x))n^*n(x)$ for all $b \in B$ and $x \in \dom(n)$. Here we use the canonical identification $B \cong C_0(X)$. Define the pseudogroup $\cG(B) \defeq \menge{\alpha_n}{n \in N_A(B)}$ on $X$. Let $G(B)$ be the groupoid of germs of $\cG(B)$, i.e., $G(B) = \menge{[x,\alpha_n,y]}{n \in N_A(B), \, y \in \dom(n), \, x = \alpha_n(y)}$. Here $[x,\alpha_n,y] = [x,\alpha_{n'},y]$ if there exists an open neighbourhood $V$ of $y$ in $X$ such that $\alpha_n \vert_V = \alpha_{n'} \vert_V$. To describe the twist, set $D \defeq \menge{(x,n,y) \in X \times N_A(B) \times X}{y \in \dom(n), \, x = \alpha_n(y)}$ and define $\Sigma(B) \defeq D / {}_{\sim}$, where $(x,n,y) \sim (x',n',y')$ if $y=y'$ and there exist $b,b' \in B$ with $b(y),b'(y) > 0$ and $nb=n'b'$. $G(B)$ and $\Sigma(B)$ are topological groupoids (see \cite{R} for details), and the canonical homomorphism $\Sigma(B) \to G(B), \, [x,n,y] \ma [x,\alpha_n,y]$, yields the central extension
$$
\Tz \times X \rightarrowtail \Sigma(B) \twoheadrightarrow G(B).
$$

We can now identify $(A,B)$ with $(C_r^*(G(B),\Sigma(B)),C_0(X))$ as follows. For $a\in N_A(B)$, the function $\hat{a}:D \to \Cz$, $\hat{a}(x,n,y) = P(n^*a)(y)/\sqrt{n^*n(y)}$, gives rise to a well-defined, continuous map $\hat{a}:\Sigma(B)\to \Cz$. If $\hat{a}$ has compact support, then $\hat{a}\in C_c(G(B),\Sigma(B))$. It is shown in \cite[\S~5]{R} that the linear map $\lspan(\menge{a \in N_A(B)}{\suppc(\hat{a}) \ \text{compact}}) \to C_c(G(B),\Sigma(B))$, $a\mapsto \hat{a}$ is an isometric $*$-isomorphism and hence extends uniquely to a Cartan isomorphism $(A,B) \cong (C^*_r(G(B),\Sigma(B)),C_0(X))$.
\eremark

\section{The UCT for nuclear C*-algebras with a Cartan subalgebra}
\label{The UCT for nuclear C*-algebras with a Cartan subalgebra}

In this section, we show that a separable, nuclear C*-algebra satisfies the UCT if it is isomorphic to the reduced C*-algebra of a twisted {\'e}tale Hausdorff locally compact second countable groupoid. In particular, applying Renault's characterization of Cartan pairs, we conclude that a separable, nuclear C*-algebra satisfies the UCT if it admits a Cartan subalgebra. Furthermore, we show that a nuclear C*-algebra satisfies the UCT if it can be written as a crossed product C*-algebra $A\rtimes_{\alpha,r} \Gamma$, where $A$ is a separable C*-algebra admitting a Cartan subalgebra $B$, $\Gamma$ a countable, discrete group, and $\alpha$ an action of $\Gamma$ on $A$ leaving $B$ globally invariant.

For the definition of groupoid actions on C*-algebras and groupoid crossed products, we refer the reader to \cite[\S~10]{MW}. For a definition of proper groupoid actions on topological spaces and C*-algebras, the reader may consult \cite[\S~1]{Tu2} and \cite[\S~1]{Tu}.

\btheo
\label{TwistedGPD-UCT}
Assume that $(G,\Sigma)$ is a twisted {\'e}tale Hausdorff locally compact second countable groupoid. If $C^*_r(G,\Sigma)$ is nuclear, then it satisfies the UCT.
\etheo
\bproof
We start with the following result (see \cite[Proposition~5.1]{EW}): There is a Hilbert $C_0(G^{(0)})$-module ${\rm \bf H}$ and a $G$-action on ${\bf A} = \cK( {\rm \bf H} )$ such that ${\bf A} \rtimes_r G$ is Morita equivalent to $C^*_r(G,\Sigma)$.
In particular, ${\bf A}$ is a $C_0(G^{(0)})$-algebra, with fibre ${\bf A}_x = \cK( {\rm \bf H} (x))$. Here, ${\rm \bf H}(x)$ is the Hilbert space arising as the pushout of ${\rm \bf H}$ by the evaluation map at $x$, $\ev_x:C_0(G^{(0)}) \to \Cz$. Indeed, ${\bf A}$ is a $C_0(G^{(0)})$-algebra in this way as there is a topology on $\mathscr{H} = \bigsqcup_{x \in G^{(0)}} {\rm \bf H}(x)$ turning $\mathscr{H} \onto G^{(0)}$ into a continuous Hilbert bundle such that ${\rm \bf H}$ is isomorphic to the space of continuous sections vanishing at infinity $\Gamma_0(G^{(0)}; \mathscr{H})$, see \cite[Theorem~II.13.18]{FD}.

If $C^*_r(G,\Sigma)$ is nuclear, then by \cite[Theorem~5.4]{Ta}, $G$ must be amenable.

Since $G$ is amenable, it acts properly on a continuous field $H$ of affine Euclidean spaces by \cite[Lemme~3.5]{Tu}. Hence, by \cite[\S~9]{Tu}, there exists a $G$-action on a C*-algebra $\cA(H)$ and $\eta \in KK_G(C_0(G^{(0)}),\cA(H))$, $D \in KK_G(\cA(H),C_0(G^{(0)}))$ such that $\eta \otimes_{\cA(H)} D = {\bf 1}_{C_0(G^{(0)})}$. An explicit construction of the C*-algebra $\cA(H)$ appears in \cite[\S~7]{Tu}. Moreover, by \cite[Lemme~7.2]{Tu} the $G$-action on $\cA(H)$ is proper. This means that there is a proper $G$-space $Z$ (constructed in \cite[\S~6]{Tu}) such that $\cA(H)$ is a $Z \rtimes G$-algebra. Hence, $\cA(H) \otimes_{C_0(G^{(0)})} {\bf A}$ is a $Z \rtimes G$-algebra as well.

Using the canonical homomorphisms
$$
KK_G(\cA(H),C_0(G^{(0)})) \to KK_G(\cA(H) \otimes_{C_0(G^{(0)})} {\bf A}, C_0(G^{(0)}) \otimes_{C_0(G^{(0)})} {\bf A})
$$
and 
$$
KK_G(C_0(G^{(0)}),\cA(H)) \to KK_G(C_0(G^{(0)}) \otimes_{C_0(G^{(0)})} {\bf A}, \cA(H) \otimes_{C_0(G^{(0)})} {\bf A})
$$ 
(see \cite[Definition~6.2]{LeG}) which we denote by $\tau_{\bf A}$, we obtain $\tau_{\bf A}(\eta) \otimes_{\cA(H) \otimes_{C_0(G^{(0)})} {\bf A}} \tau_{\bf A}(D) = {\bf 1}_{\bf A}$ in $KK_G({\bf A},{\bf A})$. Now set ${\bf y} \defeq j_G(\tau_{\bf A}(\eta)) \in KK({\bf A} \rtimes_r G, (\cA(H) \otimes_{C_0(G^{(0)})} {\bf A}) \rtimes_r G)$ and ${\bf x} \defeq j_G(\tau_{\bf A}(D)) \in KK((\cA(H) \otimes_{C_0(G^{(0)})} {\bf A}) \rtimes_r G, {\bf A} \rtimes_r G)$, where $j_G$ is the descent homomorphism, see \cite[\S 7.2]{LeGTh}. Then ${\bf y} {\bf x} = {\bf 1}_{{\bf A} \rtimes_r G}$ in $KK({\bf A} \rtimes_r G,{\bf A} \rtimes_r G)$. Using that ${\bf A} \rtimes_r G$ is Morita equivalent to $C_r^*(G,\Sigma)$, \cite[Corollary~23.10.8]{Bla} now yields that it suffices to show that $(\cA(H) \otimes_{C_0(G^{(0)})} {\bf A}) \rtimes_r G$ satisfies the UCT.

We now follow the argument in \cite[\S~10]{Tu}. It is shown in the proof of \cite[Lemme~10.6]{Tu} that there are $Z \rtimes G$-subalgebras $\cA(H)_{\ve}$, $\ve > 0$, of $\cA(H)$ such that $\cA(H) = \overline{\bigcup_{\ve > 0} \cA(H)_{\ve}}$. In addition, for every $x \in G^{(0)}$, the fibre $(\cA(H)_{\ve})_x$ is type I. Therefore, $\cA(H)_{\ve} \otimes_{C_0(G^{(0)})} {\bf A}$, $\ve > 0$, are $Z \rtimes G$-subalgebras of $\cA(H) \otimes_{C_0(G^{(0)})} {\bf A}$ such that $\cA(H) \otimes_{C_0(G^{(0)})} {\bf A} = \overline{\bigcup_{\ve > 0} \cA(H)_{\ve} \otimes_{C_0(G^{(0)})} {\bf A}}$. For every $x \in G^{(0)}$, the corresponding fibre of $\cA(H)_{\ve} \otimes_{C_0(G^{(0)})} {\bf A}$ is given by $(\cA(H)_{\ve})_x \otimes {\bf A}_x$, and since $(\cA(H)_{\ve})_x$ is type I and ${\bf A}_x = \cK({\bf H}(x))$, this means that every fibre $\rukl{\cA(H)_{\ve} \otimes_{C_0(G^{(0)})} {\bf A}}_x$ is type I. As $Z \rtimes G$ is a proper groupoid (see \cite[\S~1]{Tu}), we can apply \cite[Proposition~10.3]{Tu} and obtain that 
$$
\rukl{\cA(H)_{\ve} \otimes_{C_0(G^{(0)})} {\bf A}} \rtimes_r G \cong \rukl{\rukl{\cA(H)_{\ve} \otimes_{C_0(G^{(0)})} {\bf A}} \rtimes_r Z} \rtimes_r G \cong \rukl{\cA(H)_{\ve} \otimes_{C_0(G^{(0)})} {\bf A}} \rtimes_r (Z \rtimes G)
$$
is type I. Therefore, $(\cA(H) \otimes_{C_0(G^{(0)})} {\bf A}) \rtimes_r G = \overline{ \bigcup_{\ve > 0} \rukl{\cA(H)_{\ve} \otimes_{C_0(G^{(0)})} {\bf A}} \rtimes_r G }$ satisfies the UCT, being an inductive limit of type I C*-algebras.
\eproof

Note that in Theorem~\ref{TwistedGPD-UCT}, we did not need to assume that $G$ is topologically principal. However, restricted to that case, Theorem~\ref{TwistedGPD-UCT} gives in combination with Theorem~\ref{Renault} the following
\bcor \label{Cartan-UCT}
Let $A$ be a separable and nuclear C*-algebra. If $A$ has a Cartan subalgebra, then $A$ satisfies the UCT.
\ecor

\bremark \label{UCT <-> Kirchberg Cartan}
Every UCT Kirchberg algebra has a Cartan subalgebra. That follows from \cite[Thereom~C]{Kat} and \cite{Y1,Y2}, see also \cite{RSWY}. As every separable, nuclear C*-algebra is $KK$-equivalent to a Kirchberg algebra by \cite[Theorem I]{Kir}, we therefore conclude that the UCT problem has a positive answer if and only if every Kirchberg algebra admits a Cartan subalgebra.
\eremark

\bprop
\label{Gamma.B=B}
Let $A$ be a separable C*-algebra admitting a Cartan subalgebra $B \subseteq A$. Let $\Gamma$ be a countable group acting on $A$ via $\Gamma \times A \to A, \, (\gamma,a) \ma \gamma.a$. Assume that for every $\gamma \in \Gamma$, $\gamma.B = B$.

If $A \rtimes_r \Gamma$ is nuclear, then $A \rtimes_r \Gamma$ satisfies the UCT.
\eprop
\bproof
By assumption $\Gamma \curvearrowright A$ restricts to a $\Gamma$-action $\Gamma \curvearrowright B$. Dualizing, we obtain a $\Gamma$-action $\Gamma \curvearrowright X$ denoted by $\Gamma \times X \to X, \, (\gamma,x) \ma \gamma.x$, such that $(\gamma.b)(x) = b(\gamma^{-1}.x)$ for all $b \in B$, $x \in X$. Here we use the canonical identification $B \cong C_0(X)$. By Theorem~\ref{Renault}, we know that $(A,B) \cong (C^*_r(G,\Sigma),C_0(X))$ for a uniquely determined twist $(G,\Sigma)$. Looking at the construction of $(G,\Sigma)$ in Remark~\ref{A,B-->GPD}, it is obvious that we have $\Gamma$-actions $\Gamma \curvearrowright G$, $\Gamma \curvearrowright \Sigma$ given by $\gamma.[x,\alpha_n,y] = [\gamma.x,\alpha_{\gamma.n},\gamma.y]$ and $\gamma.[x,n,y] = [\gamma.x,\gamma.n,\gamma.y]$.

Moreover, under the identification $(A,B) \cong (C^*_r(G,\Sigma),C_0(X))$, $\gamma.f(\sigma) = f(\gamma^{-1}.\sigma)$ for all $\gamma\in \Gamma$, $\sigma\in \Sigma$, and $f\in C_c(G,\Sigma)$. Indeed, this follows from Remark~\ref{A,B-->GPD}, as if $a\in N_A(B)$ and $\hat{a}$ has compact support, then
\bglnoz
  && \hat{a}([\gamma^{-1}.x,\gamma^{-1}.n,\gamma^{-1}.y])
  = P((\gamma^{-1}.n)^*a)(\gamma^{-1}.y)/\sqrt{\gamma^{-1}.(n^*n)(\gamma^{-1}.y)} \\
  &=& (\gamma.P(\gamma^{-1}.(n^*\gamma.a)))(y) / \sqrt{n^*n(y)}
  = \widehat{\gamma.a}([x,n,y]).
\eglnoz
The last equality follows from the fact that $P$ is the unique conditional expectation from $A$ onto $B$, see \cite[Corollary 5.10]{R}.

Clearly, the canonical projection $\Sigma \to G$ is $\Gamma$-equivariant. We now form semidirect products: Let $\Gamma \ltimes G$ be the groupoid with underlying set $\Gamma \times G$, multiplication $(\gamma,g)(\gamma',g') = (\gamma \gamma', ({\gamma'}^{-1}.g)g')$, and inversion $(\gamma,g)^{-1} = (\gamma^{-1},\gamma.g^{-1})$. Note that $s((\gamma,g)) = (e,s(g))$ and $r((\gamma,g)) = (e,\gamma.r(g))$, which implies that $(\Gamma \ltimes G)^{(0)} = \gekl{e} \times  G^{(0)} \cong X$. Similarly, let $\Gamma \ltimes \Sigma$ be the groupoid with underlying set $\Gamma \times \Sigma$, multiplication $(\gamma,\sigma)(\gamma',\sigma') = (\gamma \gamma', ({\gamma'}^{-1}.\sigma)\sigma')$, and inversion $(\gamma,\sigma)^{-1} = (\gamma^{-1},\gamma.\sigma^{-1})$. Both, $\Gamma \ltimes G$ and $\Gamma \ltimes \Sigma$ are topological groupoids when equipped with the respective product topologies. As before, the homomorphism $\Gamma \ltimes \Sigma \to \Gamma \ltimes G, \, (\gamma,\sigma) \to (\gamma,\dot{\sigma})$, yields an extension
$$
\Tz \times X \rightarrowtail \Gamma \ltimes \Sigma \twoheadrightarrow \Gamma \ltimes G.
$$
One checks that $(\Gamma \ltimes G,\Gamma \ltimes \Sigma)$ is indeed a twisted {\'e}tale Hausdorff locally compact second countable groupoid. Observe also that
$$
C_c(\Gamma \ltimes G, \Gamma\ltimes \Sigma) = \menge{\phi \in C_c(\Gamma \ltimes \Sigma,\Cz)}{\phi(\gamma,z\sigma) = \phi(\gamma,\sigma) \bar{z} \ {\rm for} \ {\rm all} \ z \in \Tz}.
$$

Our goal is to show that $C^*_r(\Gamma \ltimes G,\Gamma \ltimes \Sigma) \cong A \rtimes_r \Gamma$. By Theorem~\ref{TwistedGPD-UCT}, it then follows that $A \rtimes_r \Gamma$ satisfies the UCT. More precisely, we show that the linear map $C_c(\Gamma \ltimes G, \Gamma\ltimes \Sigma) \to C_c(\Gamma,A)$
determined by $\delta_{\gamma} \otimes f \ma \delta_{\gamma} * f$, where $f \in C_c(G,\Sigma)$, is an isometric $*$-isomorphism and hence extends to an isomorphism $C^*_r(\Gamma \ltimes G,\Gamma \ltimes \Sigma) \cong A \rtimes_r \Gamma$.

For $x\in X$, let $\pi_x$ denote the representation of $C_c(G,\Sigma)$ introduced in Section \ref{Prelim}. We denote by $(\Gamma \ltimes \pi)_x$ the analogous representation associated with the twisted groupoid $(\Gamma \ltimes G, \Gamma \ltimes \Sigma)$ on the Hilbert space $\mathscr{K}_x$. Moreover, let $\tilde{\pi}_x$ denote the unique representation of $A$ on $\mathscr{H}_x$ extending $\pi_x$. By definition,
$$
\norm{\delta_{\gamma} \otimes f}_{C^*_r(\Gamma \ltimes G,\Gamma \ltimes \Sigma)} = \sup_{x \in X} \norm{(\Gamma \ltimes \pi)_x(\delta_{\gamma} \otimes f)}.
$$
Moreover,
$$
\norm{\delta_{\gamma} * f}_{A \rtimes_r \Gamma} = \sup_{x \in X} \norm{(\ti{\pi}_x \rtimes \Gamma)(\delta_{\gamma} * f)},
$$
where $\tilde{\pi}_x \rtimes \Gamma$ denotes the integrated form associated with $\tilde{\pi}_x$. To see this, note that $\menge{\pi_x}{x\in X}$ is a continuous field of representation when $\mathscr{H} = \bigsqcup_{x \in X} \mathscr{H}_x$ carries the unique continuous Hilbert bundle structure such that $C_c(G,\Sigma)$ constitutes a fundamental family of continuous sections. The induced representation $\tilde{\pi}$ of $A$ on the Hilbert $C_0(X)$-module $\Gamma_0(X;\mathscr{H})$ is faithful, and the norm on $A\rtimes_r \Gamma$ is therefore induced from the representation $\tilde{\pi}\rtimes \Gamma$. However, $\tilde{\pi}\rtimes \Gamma$ itself is induced from the continuous field $\menge{\tilde{\pi}_x\rtimes \Gamma}{x\in X}$.

Let $U:\mathscr{H}_x \otimes \ell^2 \Gamma \to \mathscr{K}_x$ be the unitary determined by $\xi \otimes \delta_i \ma \delta_i \otimes \xi$. We will show that 
$$
U^* (\Gamma \ltimes \pi)_x (\delta_{\gamma} \otimes f) U = (\ti{\pi}_x \rtimes \Gamma)(\delta_{\gamma} * f),$$
which then yields the desired isomorphism $C^*_r(\Gamma \ltimes G,\Gamma \ltimes \Sigma) \cong A \rtimes_r \Gamma$. We compute
\bglnoz
  && (\Gamma \ltimes \pi)_x(\delta_{\gamma} \otimes f) U (\xi \otimes \delta_i)(j,\sigma)
  = (\Gamma \ltimes \pi)_x(\delta_{\gamma} \otimes f) (\delta_i \otimes \xi)(j,\sigma) \\
  &=& \sum_{(k,\dot{\tau}) \in (\Gamma \ltimes G)_x} (\delta_{\gamma} \otimes f)((j,\sigma)(k,\tau)^{-1})(\delta_i \otimes \xi)(k,\tau)
  = \sum_{(k,\dot{\tau})} (\delta_{\gamma} \otimes f)((jk^{-1},k.(\sigma \tau^{-1})) \delta_{i,k} \xi(\tau) \\
  &=& \sum_{(k,\dot{\tau})} \delta_{\gamma,jk^{-1}} \delta_{i,k} f(k.(\sigma \tau^{-1})) \xi(\tau)
  = \delta_{\gamma i} \otimes \pi_x(i^{-1}.f)(\xi)(j,\sigma).
\eglnoz
Therefore, $(\Gamma \ltimes \pi)_x(\delta_{\gamma} \otimes f) U (\xi \otimes \delta_i) = \delta_{\gamma i} \otimes \pi_x(i^{-1}.f)(\xi)$. By definition of the integrated form, we also have
\bglnoz
  && U ((\ti{\pi}_x \rtimes \Gamma)(\delta_{\gamma} * f)(\xi \otimes \delta_i))
  = U ((1 \otimes \lambda_{\gamma}) \pi_x(i^{-1}.f)(\xi) \otimes \delta_i) 
  = U (\pi_x(i^{-1}.f)(\xi) \otimes \delta_{\gamma i}) \\
  & = & \delta_{\gamma i} \otimes \pi_x(i^{-1}.f)(\xi).
\eglnoz
Thus, $U^* (\Gamma \ltimes \pi)_x (\delta_{\gamma} \otimes f) U = (\ti{\pi}_x \rtimes \Gamma)(\delta_{\gamma} * f)$, and the proof is complete.
\eproof

\bremark
Let $(A,B)$ be a Cartan pair with $A$ separable and $(G,\Sigma)$ the unique twist as in Theorem~\ref{Renault} such that $(A,B) \cong (C_r^*(G, \Sigma),C_0(G^{(0)}))$. The proof of Proposition~\ref{Gamma.B=B} also shows that any automorphism $\alpha \in \Aut(A)$ satisfying $\alpha(B) = B$ is induced by an isomorphism of the twist $(G,\Sigma)$.
\eremark

\section{Crossed products by finite cyclic groups}

\label{Masas in Crossed Products}

Let $A$ be a $C^*$-algebra, $B\subseteq A$ a masa, and $\alpha\in \Aut(A)$ an automorphism. Define
$$
F_{B,\alpha}:=\menge{x\in A}{xb=\alpha(b)x\quad \text{for all}\ b\in B}.
$$
This is a closed sub-vectorspace of $A$ with $\alpha(B)F_{B,\alpha}B \subseteq F_{B,\alpha}$ (with equality, if $B$ contains an approximate unit for $A$). In fact, for $x\in F_{B,\alpha}$ and $d,e,f\in B$,
$$
\alpha(e)xfd=\alpha(e)\alpha(d)xf=\alpha(d)\alpha(e)xf.
$$
Furthermore, $F_{B,\alpha}^*=F_{\alpha(B),\alpha^{-1}}$, since
$xb=\alpha(b)x$ if and only if $x^*\alpha(b)^*=b^*x=\alpha^{-1}(\alpha(b)^*)x^*$. Also note that for $x\in F_{B,\alpha}$ and $b\in B$,
$$
x^*xb=x^*\alpha(b)x=bx^*x\quad\text{and}\quad xx^*\alpha(b)=\alpha(b)xx^*,
$$
showing that $x^*x\in B$ and $xx^*\in \alpha(B)$. If $\beta\in \Aut(A)$ is another automorphism, then $\beta(F_{B,\alpha})=F_{\beta(B),\beta\alpha\beta^{-1}}$. In particular, $\alpha(F_{B,\alpha})=F_{\alpha(B),\alpha}$.

Assume now that $\alpha^n = \id$. The definition of $F_{B,\alpha}$ is exactly made for the purpose of characterizing when a masa $B \subseteq A$ is also a masa in $A \rtimes_\alpha \Zz_n$. The following statement holds in greater generality, but as we are only concerned with finite order automorphisms, we restrict to this case. In the following, let $t$ be the unitary in (the multiplier algebra of) $A \rtimes_{\alpha} \Zz_n$ implementing the $\Zz_n$-action $\alpha$.

\begin{proposition}
\label{characterizationMasa}
Let $A$ be a $C^*$-algebra, $B \subseteq A$ a masa, and $\alpha:\Zz_n\curvearrowright A$ an action. Then $B \subseteq A \rtimes_\alpha \Zz_n$ is a masa if and only if $F_{B,\alpha^k}= 0$ for $k=1,\ldots,n-1$.
\end{proposition}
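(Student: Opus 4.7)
The plan is to use the Fourier decomposition in the crossed product: every element $x \in A \rtimes_\alpha \Zz_n$ admits a unique expansion $x = \sum_{k=0}^{n-1} t^k a_k$ with $a_k \in A$, the coefficients being recovered from the canonical conditional expectation $E \colon A \rtimes_\alpha \Zz_n \twoheadrightarrow A$ by $a_k = E(t^{-k} x)$. Since $B$ is already abelian, proving the proposition amounts to computing $B' \cap (A \rtimes_\alpha \Zz_n)$ and checking when it equals $B$.

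The key relation is $b t^k = t^k \alpha^{-k}(b)$ for $b \in B$, obtained from $t^k b t^{-k} = \alpha^k(b)$. Using this, one computes
\[
bx - xb \;=\; \sum_{k=0}^{n-1} t^k \bigl(\alpha^{-k}(b) a_k - a_k b\bigr),
\]
and by uniqueness of the Fourier coefficients, $x$ commutes with every $b \in B$ if and only if $a_k b = \alpha^{-k}(b) a_k$ for every $b \in B$ and every $k \in \{0, 1, \ldots, n-1\}$. For $k = 0$ this reduces to $a_0 \in B' \cap A = B$ (as $B$ is a masa in $A$), while for $k \in \{1, \ldots, n-1\}$ it is precisely the condition $a_k \in F_{B, \alpha^{-k}}$.

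Both implications of the proposition then follow once we note that $\alpha^{-k} = \alpha^{n-k}$ (using $\alpha^n = \id$) and that $k \mapsto n-k$ permutes $\{1, \ldots, n-1\}$, so the families $\{F_{B, \alpha^{-k}}\}_{k=1}^{n-1}$ and $\{F_{B, \alpha^k}\}_{k=1}^{n-1}$ agree. If $F_{B, \alpha^k} = 0$ for all $k = 1, \ldots, n-1$, then $a_k = 0$ for $k \ge 1$, so $x = a_0 \in B$ and $B$ is maximal abelian. Conversely, given a nonzero $y \in F_{B, \alpha^j}$ with $1 \le j \le n-1$, the element $x \defeq t^{n-j} y$ satisfies
\[
bx \;=\; t^{n-j} \alpha^{j}(b) y \;=\; t^{n-j} y b \;=\; xb
\]
for every $b \in B$ (using $y b = \alpha^j(b) y$), yet its Fourier coefficient at index $n - j \neq 0$ equals $y \neq 0$; hence $x \in B' \cap (A \rtimes_\alpha \Zz_n) \setminus B$, contradicting maximality.

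No substantial obstacle arises: the statement is essentially a direct unwinding of the definition of $F_{B, \alpha^k}$ through the standard Fourier decomposition of a finite-group crossed product. The only care needed is in fixing a consistent convention for how $t^k$ commutes past elements of $B$, and in the harmless relabeling $\alpha^{-k} \leftrightarrow \alpha^{n-k}$ that comes from $\alpha^n = \id$.
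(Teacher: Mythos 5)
Your proof is correct and follows essentially the same route as the paper: expand a general element of the crossed product as $\sum_k t^k a_k$, compute the commutator with $b \in B$ term by term using $b t^k = t^k \alpha^{-k}(b)$, and conclude via uniqueness of the coefficients that commuting with $B$ forces $a_0 \in B$ and $a_k \in F_{B,\alpha^{-k}}$ for $k \geq 1$. The only differences are cosmetic — you make the uniqueness of the Fourier coefficients and the relabeling $\alpha^{-k} = \alpha^{n-k}$ explicit, which the paper leaves implicit.
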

\begin{proof}
Let $a=\sum_{k=0}^{n-1} t^kx_k\in A\rtimes_\alpha \Zz_n$. Then
$$
0=[a,b]=\sum\limits_{k=0}^{n-1} [t^kx_k,b]=\sum\limits_{k=0}^{n-1} t^k(x_kb-\alpha^{-k}(b)x_k)
$$
for all $b \in B$ if and only if $x_k\in F_{B,\alpha^{-k}}$ for $k=0,\ldots,n-1$. Now $B \subseteq A \rtimes_\alpha \Zz_n$ is a masa if and only if $A \rtimes_\alpha \Zz_n \cap B' = B$. This is exactly the case if $F_{B,\alpha^k}= 0$ for $k=1,\ldots,n-1$.
\end{proof}

One instance where $B \subseteq A \rtimes_\alpha \Zz_n$ is not a masa is when $\alpha$ restricts to the identity on $B$. In this section, we will see that for a certain class of masas with totally disconnected spectrum the converse is also true, at least when passing to an appropriate cocycle perturbation of $\alpha$. We then use this to show the UCT for the crossed products associated with such actions. This needs some preparation first.

\begin{lemma}
\label{automaticAlphaNormality}
Let $A$ be a $C^*$-algebra, $B\subseteq A$ a masa, and $\alpha\in \Aut(A)$. Then every $y\in F_{B,\alpha}$ satisfies $yy^*=\alpha(y^*y)$.
\end{lemma}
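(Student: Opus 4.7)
The plan is to exploit the observation already recorded in the excerpt that $y^*y \in B$ and $yy^* \in \alpha(B)$, so that the difference $c \defeq yy^* - \alpha(y^*y)$ lives in the commutative $C^*$-algebra $\alpha(B)$. I will deduce $yy^* = \alpha(y^*y)$ from two opposite inequalities in $\alpha(B)$, the second one being obtained by invoking the symmetry $y^* \in F_{\alpha(B),\alpha^{-1}}$ and repeating the argument with $y^*$ in place of $y$.

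First, I will plug $b = y^*y \in B$ into the defining identity $yb = \alpha(b)y$ of $F_{B,\alpha}$; this gives $yy^*y = \alpha(y^*y)y$, i.e.\ $cy = 0$. Multiplying on the right by $y^*$ yields $c \cdot yy^* = 0$, which is now an identity in the commutative algebra $\alpha(B)$. Equivalently, $(yy^*)^2 = \alpha(y^*y)\cdot yy^*$. Interpreting this on $X \defeq \Spec(\alpha(B))$, at every point $t \in X$ either $yy^*(t) = 0$ or $yy^*(t) = \alpha(y^*y)(t)$; since $\alpha(y^*y) \geq 0$, in both cases $yy^*(t) \leq \alpha(y^*y)(t)$. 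Hence $yy^* \leq \alpha(y^*y)$ in $\alpha(B)$.

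For the reverse inequality I will apply the same reasoning to $y^*$, regarded as an element of $F_{\alpha(B),\alpha^{-1}}$ (with the masa $\alpha(B)$ and the automorphism $\alpha^{-1}$ in place of $B$ and $\alpha$). This yields $y^*y = (y^*)(y^*)^* \leq \alpha^{-1}((y^*)^* y^*) = \alpha^{-1}(yy^*)$, and applying the order-preserving automorphism $\alpha$ gives $\alpha(y^*y) \leq yy^*$. Combined with the inequality from the first step, this produces the desired equality $yy^* = \alpha(y^*y)$. I do not anticipate a genuine obstacle: the whole argument is a short commutative $C^*$-algebra computation, together with the $y \leftrightarrow y^*$ duality already recorded in the excerpt.
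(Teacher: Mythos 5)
Your proof is correct, and it takes a genuinely different route from the paper's. The paper argues by contradiction: assuming $yy^*\neq\alpha(y^*y)$, it locates a nonempty open set $U\subseteq\Spec(\alpha(B))$ on which one function strictly dominates the other, cuts down by a positive $h\in B$ with $\supp\alpha(h)\subseteq U$, and derives $\Vert xx^*\Vert<\Vert x^*x\Vert$ for $x=yh^{1/2}$, contradicting the C*-identity. Your argument is direct and purely algebraic: substituting $b=y^*y$ into the defining relation gives $(yy^*-\alpha(y^*y))y=0$, hence $(yy^*)^2=\alpha(y^*y)\,yy^*$ inside the commutative algebra $\alpha(B)$, which forces $yy^*\le\alpha(y^*y)$ pointwise on the spectrum; the reverse inequality then comes from applying the same step to $y^*\in F_{\alpha(B),\alpha^{-1}}$ (using that $\alpha(B)$ is again a masa) and hitting the result with $\alpha$. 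Both proofs rest on the same preliminary facts ($y^*y\in B$, $yy^*\in\alpha(B)$, and $F_{B,\alpha}^*=F_{\alpha(B),\alpha^{-1}}$), but yours avoids the choice of a bump function and the norm comparison entirely, replacing them with the explicit $y\leftrightarrow y^*$ symmetry; it is arguably the cleaner of the two, while the paper's version makes the localization idea visible, which is reused in the surrounding lemmas (e.g.\ Lemma~\ref{existencePartialIsometries}).
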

\begin{proof}
We have seen that $yy^*$, $\alpha(y^*y)\in \alpha(B)$. Set $X:=\Spec(\alpha(B))$. Assume that $yy^*\neq \alpha(y^*y)$. We may assume that there exists a non-empty open subset $U\subseteq X$ with the property that $yy^*(u)<\alpha(y^*y)(u)$ for all $u\in U$. Choose $h\in B$ positive with norm one such that $\alpha(h)\in \alpha(B)\cong C_0(X)$ has support in $U$. Set $x:=yh^{1/2}$ and observe that
$$
xx^*=yhy^*=\alpha(h)yy^*\in \alpha(B)\quad \text{and}\quad x^*x=h^{1/2}y^*yh^{1/2}=hy^*y\in B,
$$
yielding $\alpha(x^*x)=\alpha(h)\alpha(y^*y)\in \alpha(B)$. By construction of $h$, $0\leq xx^*(u)<\alpha(x^*x)(u)$ for all $u$ in the interior of $\supp(\alpha(h)) \subseteq U$ and $xx^*(u)=\alpha(x^*x)(u)=0$ for all other $u\in X$. In particular, $\Vert xx^*\Vert<\Vert \alpha(x^*x)\Vert=\Vert x^*x\Vert$, which is not possible. Hence, $yy^*=\alpha(y^*y)$.
\end{proof}

As we shall see next, partial isometries occurring in the polar decomposition of elements in $F_{B,\alpha}$ implement $\alpha$ locally.

\begin{lemma}
\label{locallyStronlgyInner}
Let $A$ be a C*-algebra, $B \subseteq A$ a masa, $\alpha\in \Aut(A)$, and $y\in F_{B,\alpha}$. Let $y=v|y|$ be the polar decomposition in $A^{**}$. Then the $*$-homomorphism given by adjoining with $v$,
$$
\Ad(v): \overline{y^*yBy^*y} \to \overline{\alpha(y^*yBy^*y)},
$$
coincides with the restriction of $\alpha$.
\end{lemma}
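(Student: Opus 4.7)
The plan is to reduce everything to the defining intertwining relation $yb = \alpha(b)y$ for $b \in B$, together with the information already established: $y^*y \in B$ and $yy^* = \alpha(y^*y) \in \alpha(B)$ (by Lemma~\ref{automaticAlphaNormality}), plus the fact that $p := v^*v$ and $q := vv^*$ are the support projections in $A^{**}$ of $|y|^2 = y^*y$ and of $yy^*$, respectively. Observe also that $q = \alpha^{**}(p)$, since applying $\alpha$ at the bidual level sends the support of $y^*y$ to the support of $\alpha(y^*y) = yy^*$.

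Fix $b \in \overline{y^*yBy^*y}$. Since $B$ is abelian and $b$ lies in the hereditary subalgebra of $B$ generated by $y^*y$, one has $bp = b$; trivially $vp = v$. Moreover $|y| = (y^*y)^{1/2} \in B$, so $|y|$ commutes with $b$. The central computation combines these facts with the intertwining identity: writing $y = v|y|$ gives
$$
v|y|b = yb = \alpha(b)y = \alpha(b)v|y|,
$$
and using $|y|b = b|y|$ this rearranges to $(vb - \alpha(b)v)|y| = 0$.

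The one step that requires a little care, and which I view as the main technical point, is upgrading this vanishing-on-$|y|$ to vanishing on the support projection $p$. I plan to do this by a standard functional-calculus argument: $(vb - \alpha(b)v)|y| = 0$ implies $(vb - \alpha(b)v)f(|y|) = 0$ for every $f \in C_0((0,\infty))$, and approximating the characteristic function of $(0,\infty)$ pointwise by such $f$'s forces $f(|y|) \to p$ strongly in $A^{**}$, whence $(vb - \alpha(b)v)p = 0$. Combined with $bp = b$ and $vp = v$, this yields the clean identity $vb = \alpha(b)v$.

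Multiplying on the right by $v^*$ and using that $q = vv^*$ is the support projection of $\alpha(y^*y)$, so that $\alpha(b) \in \overline{\alpha(y^*yBy^*y)}$ satisfies $\alpha(b)q = \alpha(b)$, we conclude $vbv^* = \alpha(b)$ for all $b \in y^*yBy^*y$, and the identity extends by norm-continuity to the closure. Everything apart from the support-projection passage in the third paragraph is routine bookkeeping with the polar decomposition and the defining relation $F_{B,\alpha} \cdot B = \alpha(B) \cdot F_{B,\alpha}$.
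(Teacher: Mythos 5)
Your argument is correct, but it follows a genuinely different route from the paper's. The paper's proof never needs the intertwining identity $vb=\alpha(b)v$ nor any support-projection argument in $A^{**}$: for positive $b\in B$ it writes $\Ad(v)(y^*yby^*y)=y|y|b|y|y^*=zz^*$ with $z=y|y|b^{1/2}$, notes that $|y|b^{1/2}\in B$ forces $z\in F_{B,\alpha}$, and then applies Lemma~\ref{automaticAlphaNormality} a second time to get $zz^*=\alpha(z^*z)=\alpha(y^*yby^*y)$; the polar decomposition is only used to rewrite $vy^*y$ as $y|y|$, so the computation immediately drops back into $A$. You instead prove the stronger intermediate statement $vb=\alpha(b)v$ for all $b$ in the hereditary subalgebra, which requires the passage from $(vb-\alpha(b)v)|y|=0$ to $(vb-\alpha(b)v)p=0$ via functional calculus and $\sigma$-strong approximation of the support projection in $A^{**}$. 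That step is carried out correctly (left multiplication by a fixed element is $\sigma$-strongly continuous on bounded sets, and $f_n(|y|)\nearrow p$), as are the identities $bp=b$, $vp=v$, and $\alpha(b)q=\alpha(b)$ that you use to conclude. The trade-off: the paper's computation is shorter and stays at the C*-level, whereas your version isolates the intertwining relation $vb=\alpha(b)v$, which makes the remark following the lemma (that a partial isometry $v\in F_{B,\alpha}$ implements $\alpha$ on $v^*vBv^*v$ inside $A$) completely transparent, at the cost of a routine but genuinely bidual argument.
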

\begin{proof}
Let $b\in B$ be a positive element. Then 
$$
\Ad(v)(y^*yby^*y)=y|y|b|y|y^*=y|y|b^{1/2}(y|y|b^{1/2})^*.
$$
Now $|y|b^{1/2}\in B$, and therefore $y|y|b^{1/2}\in F_{B,\alpha}$. Using Lemma \ref{automaticAlphaNormality} and the fact that $y^*y\in B$, we conclude that
$$
\Ad(v)(y^*yby^*y)=\alpha((y|y|b^{1/2})^*y|y|b^{1/2})=\alpha(b^{1/2}(y^*y)^2b^{1/2})=\alpha(y^*yby^*y).
$$
\end{proof}

If $v\in F_{B,\alpha}$ happens to be a partial isometry, then the $*$-homomorphism in Lemma~\ref{locallyStronlgyInner} is implemented by $v\in A$ and $\alpha$ restricts to $\Ad(v):v^*vBv^*v \to vv^*\alpha(B)vv^*$. In the light of Lemma~\ref{locallyStronlgyInner}, it is therefore interesting to know whether $F_{B,\alpha}$ contains non-trivial partial isometries. We present the following sufficient criterion.

\begin{lemma}
\label{existencePartialIsometries}
Let $A$ be a $C^*$-algebra, $B \subseteq A$ a masa, $\alpha\in \Aut(A)$, and let $X = \Spec(B)$. Let $x\in F_{B,\alpha}$ and denote by $f \in C_0(X)$ the positive function corresponding to $x^*x\in B \cong C_0(X)$. 

For every clopen subset $V$ of the open support of $f$, there exists a partial isometry $v\in F_{B,\alpha}$ such that $v^*v$ is the characteristic function of $V$.
\end{lemma}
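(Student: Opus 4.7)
The plan is to produce $v$ as a right $B$-multiple of $x$. Earlier in this section we already observed that $F_{B,\alpha}B\subseteq F_{B,\alpha}$, so any $v=xg$ with $g\in B$ automatically lies in $F_{B,\alpha}$, and its source-projection candidate takes the transparent form $(xg)^{*}(xg)=gfg$. I would therefore define $g\in B\cong C_0(X)$ by
$$
g(u)=f(u)^{-1/2}\quad\text{for } u\in V,\qquad g(u)=0\quad\text{for } u\in X\setminus V,
$$
and set $v\defeq xg$. A direct pointwise computation then yields $v^{*}v=gfg=g^{2}f=\chi_V$, which is a projection, making $v$ a partial isometry with the required source projection, and $v\in F_{B,\alpha}$ by the bimodule property.

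The only step that needs genuine care is the verification that this $g$ really sits in $C_0(X)$. Continuity on the open set $V$ is immediate because $f>0$ there (and in fact bounded below by a positive constant on the compact set $V$), while continuity on the open complement $X\setminus V$ is trivial. I expect the clopen hypothesis on $V$ inside the open support of $f$ to enter exactly at this point, ruling out problematic boundary points: if $V$ were merely open in $X$ without being closed, then at a point $u\in\overline{V}\setminus V\subseteq\{f=0\}$ the function $g$ would blow up along $V$-points approaching $u$, breaking continuity. Since $V$ is a compact clopen set (implicit in requiring $\chi_V\in B$), $g$ is bounded and supported on a compact set, so $g\in C_c(X)\subseteq B$. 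Once $g$ is in hand, the bimodule identity $F_{B,\alpha}B\subseteq F_{B,\alpha}$ together with the calculation $g^{2}f=\chi_V$ finish the proof at once, so I anticipate this continuity check to be the only delicate point.
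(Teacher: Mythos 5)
Your proof is correct and takes essentially the same route as the paper's: both define $g\in B\cong C_0(X)$ as $f^{-1/2}$ on $V$ and $0$ elsewhere, set $v=xg$, and conclude via $F_{B,\alpha}B\subseteq F_{B,\alpha}$ together with $v^*v=gfg=\chi_V$. The continuity/compact-support check you spell out is exactly the point the paper leaves implicit, and your use of the clopen hypothesis there is the right one.
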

\begin{proof}
We may assume that $V$ is non-empty. Let $e \in B$ denote the characteristic function of $V$, and set
$$
g(t):=
\begin{cases}
f(t)^{-1/2}& ,\ \text{if}\ t\in V,\\
0 & ,\ \text{otherwise.}
\end{cases}
$$
Then $g\in B$ is positive, $y:=xg\in F_{B,\alpha}$ and  satisfies $y^*y=gx^*xg=ef^{-1/2}ff^{-1/2}e=e$.
\end{proof}

As an immediate consequence, we have the following important observation.

\begin{corollary}
\label{TotallyDisconnected}
Let $A$ be a $C^*$-algebra, $B \subseteq A$ a masa, and $\alpha \in \Aut(A)$. Assume that the spectrum of $B$ is totally disconnected.  If $F_{B,\alpha} \neq 0$, then $F_{B,\alpha}$ contains a non-trivial partial isometry.
\end{corollary}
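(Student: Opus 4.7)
The plan is to apply Lemma \ref{existencePartialIsometries} directly. Since $F_{B,\alpha} \neq 0$, pick any nonzero element $x \in F_{B,\alpha}$. Then $x^*x$ is a nonzero positive element of $B$; under the identification $B \cong C_0(X)$ with $X = \Spec(B)$, it corresponds to a positive function $f \in C_0(X)$ whose open support $U := \{t \in X : f(t) > 0\}$ is nonempty.

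The key observation is topological: because $X$ is locally compact, Hausdorff, and totally disconnected, it is zero-dimensional, i.e., it admits a basis consisting of compact open (hence clopen) subsets. In particular, the nonempty open set $U$ contains a nonempty clopen subset $V \subseteq U$.

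Now I invoke Lemma \ref{existencePartialIsometries} with this choice of $V$: it produces a partial isometry $v \in F_{B,\alpha}$ with $v^*v$ equal to the characteristic function $\chi_V \in B$. Since $V \neq \emptyset$, we have $v^*v \neq 0$, so $v$ is a nontrivial partial isometry, completing the proof.

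The only subtle point is the appeal to zero-dimensionality of $X$; everything else is a direct translation of the hypothesis and an application of the previous lemma. There is no real obstacle since the substantive content (the explicit polar-decomposition-like construction) has been carried out in Lemma \ref{existencePartialIsometries}.
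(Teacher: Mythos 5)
Your proof is correct and is exactly the argument the paper intends: the corollary is stated as an immediate consequence of Lemma \ref{existencePartialIsometries}, with the only non-formal ingredient being the standard fact that a locally compact Hausdorff totally disconnected space has a basis of compact open sets, so the open support of $x^*x$ contains a nonempty clopen subset. Nothing further is needed.
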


\bdefin
Recall that an $\alpha$-\emph{cocycle} for a discrete group action $\alpha:\Gamma\curvearrowright A $ is a map $u:\Gamma\to \cU(\cM(A))$ satisfying $u_{\gamma \gamma'} = u_\gamma\alpha_\gamma(u_{\gamma'})$ for all $\gamma, \gamma' \in \Gamma$. Given an $\alpha$-cocycle $u$, we define the perturbed $\Gamma$-action $\alpha^u$ via $\alpha^u_\gamma = \Ad(u_\gamma)\circ\alpha_\gamma$. Two actions $\alpha$ and $\beta$ are called \emph{exterior equivalent}, if there exists an $\alpha$-cocycle $u$ such that $\beta=\alpha^u$. Two actions $\alpha$ and $\beta$ are called \emph{cocycle conjugate}, if there is an $\alpha$-cocycle $u$ such that $\alpha^u$ and $\beta$ are conjugate.
\edefin
The crossed products $A\rtimes_\alpha \Gamma$ and $A\rtimes_{\alpha^u} \Gamma$ are isomorphic via an isomorphism fixing $A$ pointwise.

Next we show that if $\alpha$ is an automorphism with $\alpha^n = \id$ and $B \subseteq A$ is a masa with totally disconnected spectrum, then partial isometries in $F_{B,\alpha}$ can be chosen to satisfy a certain ``local cocycle'' condition. Observe that a unitary $u\in \cU(\cM(A))$ satisfying $u\alpha(u)\ldots\alpha^{n-1}(u)=1$ gives rise to an $\alpha$-cocycle (where $\alpha$ is considered as a $\Zz_n$-action) via $u_0 := 1$ and $u_k:=u\alpha(u)\ldots\alpha^{k-1}(u)$ for $k=1,\ldots,n-1$.

\begin{proposition}
\label{localCocycle}
Let $A$ be a $C^*$-algebra, $B \subseteq A$ a masa, and $\alpha:\Zz_n\curvearrowright A$ an action. Assume that $B$ has totally disconnected spectrum. Given a partial isometry $v\in F_{B,\alpha}$, there exists another partial isometry $w\in F_{B,\alpha}$ with $w^*w=v^*v$ and $\alpha^{n-1}(w)\ldots\alpha(w)=w^*$.

In particular, if $A$ is unital and $v$ is a unitary, then $w^*$ is an $\alpha$-cocycle and the perturbed action $\alpha^{w^*}$ satisfies $\alpha^{w^*}(b) = b$ for all $b \in B$.
\end{proposition}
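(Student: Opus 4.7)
The plan is to seek $w$ of the form $w = vz$ for some unitary $z$ in the corner $pBp$, with $p := v^*v$. Any such $w$ automatically lies in $F_{B,\alpha}$ (because $z \in B$ commutes with elements of $B$) and has $w^*w = z^*pz = z^*z = p$, so the only issue is to pick $z$ so that $\alpha^{n-1}(w)\cdots\alpha(w) = w^*$.

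The correct choice of $z$ is dictated by the ``winding element'' $u := \alpha^{n-1}(v)\alpha^{n-2}(v)\cdots\alpha(v)v$. Using $v^*v = p$, $vv^* = \alpha(p)$ (Lemma \ref{automaticAlphaNormality}), and the absorption identities $vp = v$, $\alpha(p)v = v$, a telescoping computation gives $u^*u = uu^* = p$; and iterated application of $vb = \alpha(b)v$ yields $ub = \alpha^n(b)u = bu$ for all $b \in B$. Since $B$ is a masa in $A$, this places $u$ in $B' \cap A = B$, so $u$ is a unitary in $pBp$. Now take $w := vz$ with $z$ chosen so that $z^n = u^*$. Expanding $\alpha^{n-1}(w)\cdots\alpha(w)\,w$ as an alternating product of $\alpha^k(v)$'s and $\alpha^k(z)$'s and sweeping each $\alpha^k(z)$ rightward past the remaining $v$-factors via the identity $\alpha^{k+1}(z)\alpha^k(v) = \alpha^k(v)\alpha^k(z)$ (the $\alpha^k$-translate of $vb = \alpha(b)v$), the $z$'s collect on the right as $z^n$ while the $v$'s reassemble as $u$, so the product equals $uz^n = uu^* = p = w^*w$. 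Setting $y := \alpha^{n-1}(w)\cdots\alpha(w)$, the same telescoping applied to $w$ gives $y^*y = \alpha(p) = ww^*$, hence $y = y\cdot y^*y = (yw)w^* = pw^* = w^*$, the last equality from $wp = w$.

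The step where the totally disconnected hypothesis enters essentially is the existence of $z$: the corner $pBp$ is isomorphic to $C(K)$, where $K$ is the clopen support of $p$ in $\Spec(B)$, hence compact Hausdorff and totally disconnected; such algebras have vanishing $K_1$ (equivalently $\check{H}^1(K,\Zz) = 0$), so every unitary in $pBp$ is an exponential and in particular admits $n$-th roots. This is also where I expect the main obstacle to lie should one attempt a generalization: without total disconnectedness, $u$ may represent a non-trivial $K_1$-class in $pBp$ that cannot be killed by any correction $z \in pBp$, since multiplying $v$ by such a $z$ only alters $u$ by the $n$-th power $z^n$.

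The ``in particular'' clause is then purely formal. In the unital unitary case $p = 1$ and $w$ is a unitary; the relation $\alpha^{n-1}(w)\cdots\alpha(w) = w^*$ rearranges (by taking adjoints of both sides and multiplying by $w$ on the right after $w$ has been moved) to $w^*\alpha(w^*)\cdots\alpha^{n-1}(w^*) = 1$, which is precisely the condition that the assignment $1 \mapsto w^*$ extends to an $\alpha$-cocycle. Finally, $wb = \alpha(b)w$ for $b \in B$ gives $\alpha^{w^*}(b) = w^*\alpha(b)w = w^*wb = b$, so $\alpha^{w^*}$ fixes $B$ pointwise.
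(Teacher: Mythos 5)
Your proposal is correct and follows essentially the same route as the paper: both show the winding element $\alpha^{n-1}(v)\cdots\alpha(v)v$ lies in $B$ by the masa argument, identify it as a unitary in the corner $pBp$ via Lemma~\ref{automaticAlphaNormality}, use total disconnectedness of $\Spec(B)$ to extract an $n$-th root, and correct $v$ by that root. The only cosmetic differences are that you verify the final identity via the partial isometry relation $y=yy^*y$ where the paper reapplies Lemma~\ref{automaticAlphaNormality}, and your closing remark on the $K_1$-obstruction is a nice observation not present in the paper.
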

\begin{proof}
For $b\in B$,
$$
\alpha^{n-1}(v)\ldots\alpha(v)vb 
  = \alpha^{n-1}(v)\ldots\alpha(v)\alpha(b)v 
  = \alpha^n(b)\alpha^{n-1}(v)\ldots \alpha(v)v
  = b\alpha^{n-1}(v)\ldots \alpha(v)v.
$$
As $B \subseteq A$ is a masa, this shows that $\alpha^{n-1}(v)\ldots\alpha(v)v\in B$. Using Lemma \ref{automaticAlphaNormality} $n$ times, we compute
\bglnoz
  && \alpha^{n-1}(v)\ldots\alpha(v)vv^*\alpha(v)^*\ldots  \alpha^{n-1}(v)^*
  = \alpha^{n-1}(v)\ldots\alpha(v)\alpha(v^*v)\alpha(v)^*\ldots\alpha^{n-1}(v)^* \\
  &=& \alpha^{n-1}(v)\ldots\alpha^2(v)\alpha(vv^*)\alpha^2(v)^*\ldots\alpha^{n-1}(v)^* = \alpha^{n-1}(vv^*) = v^*v.
\eglnoz
Thus, $f:=(\alpha^{n-1}(v)\ldots\alpha(v)v)^*$ is a local unitary with source and range projection $p=v^*v$. Since $\Spec(B)$ is totally disconnected, we find $b \in B \cong C_0(\Spec(B))$ with $f = b^n$. Then $w:=vb\in F_{B,\alpha}$ satisfies
$$
w^*w=b^*v^*vb=b^*pb=p.
$$
By repeatedly using the defining relation of $F_{B,\alpha}$, we also get that
$$
\alpha^{n-1}(w)\ldots\alpha(w)w=\alpha^{n-1}(v)\ldots\alpha(v)vb^n=f^*f=p.
$$
By Lemma \ref{automaticAlphaNormality},
$$
\alpha^{n-1}(w)\ldots\alpha(w) = \alpha^{n-1}(w)\ldots\alpha(w)\alpha(w^*w) = \alpha^{n-1}(w)\ldots\alpha(w)ww^* = pw^* = w^*,
$$
and the proof is complete.
\end{proof}

Lemma \ref{automaticAlphaNormality} yields that $v,w\in F_{B,\alpha}$ as in Proposition \ref{localCocycle} automatically satisfy
$$
ww^*=\alpha(w^*w)=\alpha(v^*v)=vv^*.
$$

The following observation is crucial for the proof of this section's main result, but also noteworthy in its own right.

\bprop
\label{existenceUnitaryDiagonalNotMasa}
Let $A$ be a unital C*-algebra and $B \subseteq A$ a masa. Assume that $B$ has totally disconnected spectrum, and that for every non-zero projection $e \in B$, there exists an isometry $s \in A$ such that $sBs^* \subseteq eBe$. Moreover, let $\alpha$ be an automorphism on $A$ with the property that $F_{B,\alpha}\neq 0$. 

Then there exists a unitary $w \in A$ satisfying $\alpha \vert_B = \Ad(w) \vert_B$.
\eprop
\begin{proof}
By Corollary \ref{TotallyDisconnected}, there exists a non-trivial partial isometry $v \in F_{B,\alpha}$. Set $e \defeq v^*v$. Then $e \in B$, and we have $\alpha(e) = \alpha(v^*v) = vv^*$ by Lemma~\ref{automaticAlphaNormality}. Choose an isometry $s \in A$ with $sBs^* \subseteq eBe$. Define $w \defeq \alpha(s)^* v s$. Then 
$$
  wbw^* 
  = \alpha(s)^* v (s b s^*) v^* \alpha(s) 
  = \alpha(s)^* \alpha(s b s^*) v v^* \alpha(s) 
  = \alpha(s)^* \alpha(s b s^*) \alpha(e) \alpha(s)
  = \alpha(b)
$$
for every $b \in B$. In particular, $ww^* = 1$. Moreover,
$$
  w^*w 
  = s^* v^* \alpha(s) \alpha(s)^* v s
  = s^* v^* v s s^* s
  = s^* e s = 1.
$$
\end{proof}

\bremark
Note that we can always find isometries as in Proposition~\ref{existenceUnitaryDiagonalNotMasa} if $(A,B) \cong (C^*_r(G),C(G^{(0)}))$ for an {\'e}tale Hausdorff locally compact second countable topologically principal groupoid $G$ which is minimal and purely infinite, and that has the property that $G^{(0)}$ is homeomorphic to the Cantor set, see \cite[Proposition~4.11]{Ma}. In particular, this is the case if $(A,B) \cong (\cO_2,\cD_2)$, where $\cD_2$ is the canonical masa in $\cO_2$, see \cite[\S~III.2]{Rbook} and \cite[Lemma~6.1]{Ma}.
\eremark

\begin{theorem}
\label{characterizationDiagonalNotMasa}
Let $(A,B)$ be as in Proposition~\ref{existenceUnitaryDiagonalNotMasa}. Let $p$ be a prime number and $\alpha:\Zz_p\curvearrowright A$ an action. Then the following are equivalent:
\begin{enumerate}
 \item[(i)]
  $B \subseteq A\rtimes_\alpha \Zz_p$ is not a masa.
  \item[(ii)]
   $\alpha$ is exterior equivalent to a $\Zz_p$-action fixing $B$ pointwise.
\end{enumerate}
Moreover, if $B \subseteq A$ is regular and satisfies the equivalent conditions (i) and (ii), then $B$ is also regular as a sub-C*-algebra of $A \rtimes_\alpha \Zz_p$. 
\end{theorem}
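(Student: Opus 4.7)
My plan starts with the easy direction. For (ii) $\Rightarrow$ (i), I assume there is an $\alpha$-cocycle $u$ with $\alpha^u$ fixing $B$ pointwise. The canonical isomorphism $A \rtimes_\alpha \Zz_p \cong A \rtimes_{\alpha^u} \Zz_p$ is the identity on $A$, and hence on $B$, so it is enough to argue in the right hand crossed product. There the implementing unitary $t$ satisfies $tbt^* = \alpha^u(b) = b$ for every $b \in B$, so $t$ commutes with $B$ but does not belong to $B$. Therefore $B$ is not a masa in $A \rtimes_{\alpha^u} \Zz_p$, and hence not in $A \rtimes_\alpha \Zz_p$ either.

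For (i) $\Rightarrow$ (ii) the crucial task is to exhibit an $\alpha$-cocycle with the desired property. By Proposition~\ref{characterizationMasa}, the failure of $B$ to be a masa in $A \rtimes_\alpha \Zz_p$ supplies some $k \in \gekl{1, \ldots, p-1}$ with $F_{B,\alpha^k} \neq 0$. Proposition~\ref{existenceUnitaryDiagonalNotMasa}, applied to the automorphism $\alpha^k$ (the standing hypotheses on $(A,B)$ do not refer to $\alpha$, so this is legitimate), yields a unitary $v \in A$ with $\alpha^k|_B = \Ad(v)|_B$. Since $p$ is prime, I pick $\ell$ with $k\ell \equiv 1 \pmod p$, and then
$$
\alpha|_B = \alpha^{k\ell}|_B = (\Ad(v)|_B)^\ell = \Ad(v^\ell)|_B.
$$
In particular $v^\ell$ is a unitary lying in $F_{B,\alpha}$. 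Now I feed this unitary into Proposition~\ref{localCocycle} applied to the $\Zz_p$-action $\alpha$ itself; the proposition produces a unitary $w \in F_{B,\alpha}$ whose adjoint $w^*$ defines an $\alpha$-cocycle for which the perturbation $\alpha^{w^*}$ fixes $B$ pointwise, completing the implication.

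For the moreover part, I may assume, after replacing $\alpha$ by an exterior equivalent action, that $\alpha$ itself fixes $B$ pointwise; the canonical isomorphism $A \rtimes_\alpha \Zz_p \cong A \rtimes_{\alpha^{w^*}} \Zz_p$ is the identity on $A \supseteq B$, so regularity of $B$ is preserved. In the crossed product with $\alpha|_B = \id_B$, the implementing unitary $t$ satisfies $tBt^* = B$, so $t \in N_{A\rtimes_\alpha \Zz_p}(B)$. Together with $N_A(B)$, which generates $A$ by hypothesis, $t$ generates all of $A \rtimes_\alpha \Zz_p$. Hence $B$ is regular there as well.

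The main subtlety is the index shuffle in the (i) $\Rightarrow$ (ii) direction: primality of $p$ is used precisely to invert $k$ modulo $p$, thereby converting a witness for $\alpha^k$ into one for $\alpha$. Once $F_{B,\alpha}$ is shown to be non-trivial, the remainder is a direct chaining of Propositions~\ref{characterizationMasa}, \ref{existenceUnitaryDiagonalNotMasa} and \ref{localCocycle}, with no further analytic input needed.
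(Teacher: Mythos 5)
Your overall route is the same as the paper's: reduce via Proposition~\ref{characterizationMasa} to $F_{B,\alpha^k}\neq 0$, use primality of $p$ to pass from $\alpha^k$ to $\alpha$, produce a unitary in $F_{B,\alpha}$ via Proposition~\ref{existenceUnitaryDiagonalNotMasa}, and then invoke Proposition~\ref{localCocycle}; the easy direction and the regularity statement are also argued exactly as in the paper. However, there is one step that fails as written: the chain $\alpha|_B=\alpha^{k\ell}|_B=(\Ad(v)|_B)^{\ell}=\Ad(v^{\ell})|_B$. The composite $(\Ad(v)|_B)^{\ell}$ is not even well defined unless $vBv^*=B$, and there is no reason for that: the unitary $v$ from Proposition~\ref{existenceUnitaryDiagonalNotMasa} satisfies $vBv^*=\alpha^k(B)$, which need not equal $B$ since $\alpha$ is not assumed to preserve $B$. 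Consequently $\alpha^{k\ell}|_B=\Ad(v^\ell)|_B$ is unjustified, and the claim that $v^\ell\in F_{B,\alpha}$ does not follow.

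The repair is local. Setting $u_m:=\alpha^{(m-1)k}(v)\cdots\alpha^{k}(v)\,v$, an easy induction using $\alpha^{k}(b)=vbv^*$ for $b\in B$ and multiplicativity of $\alpha^k$ on all of $A$ gives $\alpha^{mk}(b)=u_m b\, u_m^*$ for all $b\in B$; with $k\ell\equiv 1 \pmod p$ one gets the unitary $u_\ell\in F_{B,\alpha}$, and the rest of your argument goes through unchanged. (Equivalently, one can argue as the paper implicitly does: since $\gcd(k,p)=1$, the automorphism $\alpha^k$ generates the same $\Zz_p$-action up to reindexing, exterior equivalence statements transfer under automorphisms of $\Zz_p$, and so one may assume $k=1$ from the outset.) With that one-line correction your proof is complete and coincides with the paper's.
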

\begin{proof}
Proposition \ref{characterizationMasa} yields that $B \subseteq A\rtimes_\alpha \Zz$ is not a masa if and only if $F_{B,\alpha^k}\neq0$ for some $k\in \gekl{1,\cdots,p-1}$. Since $p$ is prime, we may assume that $k=1$. By Proposition~\ref{existenceUnitaryDiagonalNotMasa} and Proposition~\ref{localCocycle}, there exists an $\alpha$-cocycle $w$ with the property that $\alpha^w(b)=b$ for all $b\in B$. This shows that (i) implies (ii). 

For the other implication, let $\beta:\Zz_p\curvearrowright A$ be an action as in (ii). Then $t  \in A\rtimes_\beta \Zz_p \cap B'$, showing that $B \subseteq A \rtimes_\beta \Zz_p$ is not a masa. As $\alpha$ is exterior equivalent to $\beta$, $A \rtimes_\alpha \Zz_p$ and $A \rtimes_\beta \Zz_p$ are isomorphic via an isomorphism fixing $A$ pointwise. Hence, $B \subseteq A\rtimes_\alpha \Zz_p$ is not a masa.

Lastly, the proof of \an{(ii) implies (i)} also shows that $B$ is regular in $A \rtimes_\alpha \Zz_p$ if it is regular in $A$ and satisfies (i) and (ii).
\end{proof}

\begin{corollary}
\label{UCTregular}
Let $(A,B)$ be a Cartan pair with $A$ nuclear, and assume that $(A,B)$ satisfies the conditions of Proposition~\ref{existenceUnitaryDiagonalNotMasa}. Let $p$ be a prime number and $\alpha:\Zz_p\curvearrowright A$ an action. Assume that there exists an automorphism $\gamma\in \Aut(A)$ such that $\gamma(B)$ is a regular sub-$C^*$-algebra of $A\rtimes_\alpha \Zz_p$. Then $A\rtimes_\alpha \Zz_p$ satisfies the UCT.
\end{corollary}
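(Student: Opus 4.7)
The plan is to argue by dichotomy on whether $\gamma(B)$ is a masa in $A \rtimes_\alpha \Zz_p$. A preliminary observation is that the hypotheses of Proposition~\ref{existenceUnitaryDiagonalNotMasa} are stable under automorphisms of $A$: the spectrum of $\gamma(B)$ is still totally disconnected, and for any nonzero projection $e \in \gamma(B)$, choosing $e' \in B$ with $\gamma(e') = e$ and an isometry $s \in A$ with $sBs^* \subseteq e'Be'$, the element $\gamma(s)$ is an isometry witnessing $\gamma(s)\gamma(B)\gamma(s)^* \subseteq e\gamma(B)e$. Hence $(A,\gamma(B))$ is again a Cartan pair satisfying the assumptions of Theorem~\ref{characterizationDiagonalNotMasa}.

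Suppose first that $\gamma(B) \subseteq A \rtimes_\alpha \Zz_p$ is a masa. The plan is to verify directly that it is then a Cartan subalgebra of the crossed product and invoke Corollary~\ref{Cartan-UCT}. Maximal abelianness is the case assumption and regularity is the standing hypothesis of the corollary. An approximate identity of $A$ lies in $\gamma(B)$ and is an approximate identity for $A \rtimes_\alpha \Zz_p$. A faithful conditional expectation onto $\gamma(B)$ arises as $\gamma \circ P \circ \gamma^{-1} \circ E$, where $E : A \rtimes_\alpha \Zz_p \onto A$ is the canonical (faithful) conditional expectation and $P : A \onto B$ is the Cartan expectation. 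Since $A \rtimes_\alpha \Zz_p$ is nuclear ($A$ nuclear, $\Zz_p$ amenable), Corollary~\ref{Cartan-UCT} produces the UCT.

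Suppose instead that $\gamma(B) \subseteq A \rtimes_\alpha \Zz_p$ is not a masa. Theorem~\ref{characterizationDiagonalNotMasa}, applied to the pair $(A,\gamma(B))$ and $\alpha$, yields that $\alpha$ is exterior equivalent to some action $\beta : \Zz_p \curvearrowright A$ that fixes $\gamma(B)$ pointwise. Exterior equivalence furnishes an isomorphism $A \rtimes_\alpha \Zz_p \cong A \rtimes_\beta \Zz_p$ that is the identity on $A$, and $\beta$ in particular preserves the Cartan subalgebra $\gamma(B) \subseteq A$ globally. Since the crossed product is nuclear, Proposition~\ref{Gamma.B=B} applies to $\beta : \Zz_p \curvearrowright A$ and gives the UCT for $A \rtimes_\beta \Zz_p$, whence for $A \rtimes_\alpha \Zz_p$. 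The most delicate step is arguably the masa case: the hypotheses of the corollary only supply regularity in the crossed product, and one must genuinely assemble the remaining Cartan structure (especially the faithful expectation) by hand before Corollary~\ref{Cartan-UCT} can be applied. The non-masa case is, by contrast, essentially a bookkeeping exercise once Theorem~\ref{characterizationDiagonalNotMasa} and Proposition~\ref{Gamma.B=B} are in place.
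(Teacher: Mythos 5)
Your proof is correct and follows essentially the same route as the paper: the same dichotomy on whether $\gamma(B)$ is a masa in $A\rtimes_\alpha\Zz_p$, with Corollary~\ref{Cartan-UCT} in the first case and Theorem~\ref{characterizationDiagonalNotMasa} plus Proposition~\ref{Gamma.B=B} in the second (the paper conjugates the action to $\gamma^{-1}\alpha\gamma$ and works with $B$, whereas you transport the hypotheses to $\gamma(B)$ and keep $\alpha$, which is an equivalent bookkeeping choice). Your explicit verification of the Cartan structure in the masa case, in particular the faithful expectation $\gamma\circ P\circ\gamma^{-1}\circ E$, is a detail the paper leaves implicit and is correctly carried out.
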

\begin{proof}
If $\gamma(B) \subseteq A\rtimes_\alpha \Zz_p$ is a masa, then it is a Cartan subalgebra, and $A\rtimes_\alpha \Zz_p$  satisfies the UCT by Theorem~\ref{TwistedGPD-UCT}. Therefore assume that $\gamma(B) \subseteq A\rtimes_\alpha \Zz_p$ is not a masa. Since $p$ is prime, we may assume that $F_{B,\gamma^{-1}\alpha\gamma} = F_{\gamma(B),\alpha}\neq 0$. The conjugate action $\beta:\Zz_p\curvearrowright A$ induced by $\beta:=\gamma^{-1}\alpha\gamma$ has the property that $B \subseteq A\rtimes_\beta\Zz_p$ is not a masa. By Theorem~\ref{characterizationDiagonalNotMasa}, there exists a $\beta$-cocycle $w \in A$ such that $\beta^w$ fixes $B$ pointwise. Thus $A\rtimes_\alpha\Zz_p\cong A\rtimes_{\beta^w}\Zz_p$ satisfies the UCT by Proposition~\ref{Gamma.B=B}.
\end{proof}

As a consequence of Corollary~\ref{UCTregular}, we obtain the following
\bcor
\label{inner-conj-->UCT}
Let $(A,B)$ be a Cartan pair with $A$ nuclear, and assume that $(A,B)$ satisfies the conditions of Proposition~\ref{existenceUnitaryDiagonalNotMasa}. Let $p$ be a prime number and $\alpha:\Zz_p\curvearrowright A$ an action. Suppose that there exists an automorphism $\gamma\in \Aut(A)$ such that $\alpha(\gamma(B)) = u \gamma(B) u^*$ for some $u \in \cU(A)$. Then $A \rtimes_{\alpha} \Zz_p$ satisfies the UCT.
\ecor
\bproof
As $\alpha(\gamma(B)) = u \gamma(B) u^*$, we have that $u^*t \in N_{A \rtimes_\alpha \Zz_p}(\gamma(B))$. Moreover, regularity of $\gamma(B)$ in $A$ yields that $u \in A = C^*(N_A(\gamma(B))) \subseteq C^*(N_{A \rtimes_\alpha \Zz_p}(\gamma(B)))$. Hence, $t = u(u^*t) \in C^*(N_{A \rtimes_\alpha \Zz_p}(\gamma(B)))$. Using again that $\gamma(B)$ is regular in $A$, we conclude that it is also regular in $A \rtimes_\alpha \Zz_p$. The claim now follows from Corollary~\ref{UCTregular}.
\eproof

\bremark
As we mentioned before, Corollary~\ref{UCTregular} and Corollary~\ref{inner-conj-->UCT} in particular apply to $(A,B) = (\cO_2,\cD_2)$. This is already of interest since it suffices to prove the UCT for crossed products of the form $\cO_2 \rtimes_{\alpha} \Zz_p$ in order to solve the UCT problem, see \cite[Theorem 4.17]{BS}. However, also note that by \cite[Corollary 3.8]{CHS}, there exist uncountably many masas in $\cO_n$, $n \in \Nz$, which are outer but not inner conjugate to the canonical masa $\cD_n$.
\eremark

\section{On the UCT problem for $M_{2^\infty}$-stable C*-algebras}
\label{Section UCT}

Let $(G_1,\Sigma_1)$, ..., $(G_n,\Sigma_n)$ be twisted {\'e}tale Hausdorff locally compact second countable topologically principal groupoids, with compact unit spaces $\Sigma_i^{(0)} = G_i^{(0)}$. Set $G \defeq G_1 \times \dotso \times G_n$ as a topological groupoid. Form $\Sigma \defeq \rukl{\Sigma_1 \times \dotso \times \Sigma_n} / \sim$, where $(\dotsc, z \sigma_i, \dotsc, \sigma_j, \dotsc) \sim (\dotsc, \sigma_i, \dotsc, z \sigma_j, \dotsc)$ for all $z \in \Tz$ and $1 \leq i, j \leq n$. Write $[\sigma_1, \dotsc, \sigma_n]$ for the class of $(\sigma_1, \dotsc, \sigma_n)$ in $\Sigma$. It is easy to see that $\Sigma$ becomes a Hausdorff locally compact second countable groupoid via $[\sigma_1, \dotsc, \sigma_n] \cdot [\tau_1, \dotsc, \tau_n] = [\sigma_1 \tau_1, \dotsc, \sigma_n \tau_n]$. Moreover, we have a canonical $\Tz$-action given by $z [\sigma_1, \dotsc, \sigma_n] \defeq [z \sigma_1, \dotsc, \sigma_n] = \dotso = [\sigma_1, \dotsc, z \sigma_n]$. It is also easy to see that the map $\Sigma \to G, \, [\sigma_1, \dotsc, \sigma_n] \ma (\dot{\sigma}_1, \dotsc, \dot{\sigma}_n)$ gives rise to a central groupoid extension
$\Tz \times G^{(0)} \rightarrowtail \Sigma \twoheadrightarrow G$,
so that $(G,\Sigma)$ is again a twisted {\'e}tale Hausdorff locally compact second countable topologically principal groupoid.

\blemma \label{Cartan Tensor Product}
We have a canonical identification $C^*_r(G_1,\Sigma_1) \otimes_{\rm min} \dotso \otimes_{\rm min} C^*_r(G_n,\Sigma_n) \overset{\cong}{\lori} C^*_r(G,\Sigma)$ sending $f_1 \otimes \dotso \otimes f_n$ to the function $[\sigma_1, \dotsc, \sigma_n] \ma f(\sigma_1) \dotsm f(\sigma_n)$ in $C_c(G,\Sigma)$, for $f_i \in C_c(G_i,\Sigma_i)$.
\elemma
\bproof
By induction, it suffices to treat the case $n = 2$. In that case, let $x_1 \in G_1^{(0)}$ and $x_2 \in G_2^{(0)}$ be arbitrary, and let $\pi_{x_1}$, $\pi_{x_2}$ be the representations of $C_c(G_1,\Sigma_1)$, $C_c(G_2,\Sigma_2)$ on the Hilbert spaces $\mathscr{H}_{x_1}$, $\mathscr{H}_{x_2}$ introduced in  Section \ref{Prelim}. Also, let $\pi_{(x_1,x_2)}$ be the representation of $C_c(G,\Sigma)$ on $\mathscr{H}_{(x_1,x_2)}$ as in Section \ref{Prelim}. A straightforward computation shows that $\xi_1 \otimes \xi_2 \ma \eckl{[\sigma_1, \sigma_2] \ma \xi(\sigma_1) \cdot \xi(\sigma_2)}$ extends to a unitary $U: \: \mathscr{H}_{x_1} \otimes \mathscr{H}_{x_2} \overset{\cong}{\lori} \mathscr{H}_{(x_1,x_2)}$. In addition, let $f_i \in C_c(G_i,\Sigma_i)$ and write $f$ for the function $[\sigma_1, \sigma_2] \ma f_1(\sigma_1) \cdot f_2(\sigma_2)$ in $C_c(G,\Sigma)$. Then it is straightforward to check that $U \circ (\pi_{x_1}(f_1) \otimes \pi_{x_2}(f_2)) = (\pi_{(x_1,x_2)}(f)) \circ U$. Our claim follows.
\eproof

Let $(G_i,\Sigma_i)$, $i \in \Nz$, be twisted {\'e}tale Hausdorff locally compact second countable topologically principal groupoids with compact unit spaces $\Sigma_i^{(0)} = G_i^{(0)}$. Form $G \defeq {\prod'}_{i=1}^{\infty} (G_i, G_i^{(0)})$. Here, we define ${\prod'}_{i=1}^{\infty} (G_i, G_i^{(0)}) \defeq \menge{(\gamma_i)_i \in \prod_{i=1}^{\infty} G_i}{\gamma_i \in G_i^{(0)} \ {\rm for \ almost \ all} \ i \in \Nz}$. $G$ becomes an {\'e}tale Hausdorff locally compact second countable topologically principal groupoid with respect to component-wise multiplication. Form $\Sigma \defeq {\prod'}_{i=1}^{\infty} (\Sigma_i, \Sigma_i^{(0)}) / \sim$, where the restricted product is defined as above, and $(\dotsc, z \sigma_i, \dotsc, \sigma_j, \dotsc) \sim (\dotsc, \sigma_i, \dotsc, z \sigma_j, \dotsc)$ for all $z \in \Tz$ and $i, j \in \Nz$. Write $[\sigma_i]_i$ for the class of $(\sigma_i)_i$ in $\Sigma$. It is easy to see that $\Sigma$ becomes a Hausdorff locally compact second countable groupoid via $[\sigma_i]_i \cdot [\tau_i]_i = [\sigma_i \tau_i]_i$. Moreover, we have a canonical $\Tz$-action given by $z [\sigma_1, \sigma_2, \dotsc] \defeq [z \sigma_1, \sigma_2, \dotsc] = [\sigma_1, z \sigma_2, \dotsc] = \dotso$. Obviously, the map $\Sigma \to G, \, [\sigma_i]_i \ma (\dot{\sigma}_i)_i$ yields a central groupoid extension
$\Tz \times G^{(0)} \rightarrowtail \Sigma \twoheadrightarrow G$,
so that $(G,\Sigma)$ is again a twisted {\'e}tale Hausdorff locally compact second countable topologically principal groupoid.

\blemma \label{Cartan Infinite Tensor Product}
We have a canonical identification $\bigotimes_{i=1}^{\infty} C^*_r(G_i,\Sigma_i) \cong C^*_r(G,\Sigma)$.
\elemma
\bproof
In the infinite tensor product $\bigotimes_{i=1}^{\infty} C^*_r(G_i,\Sigma_i)$, there is a canonical dense sub-*-algebra given by the algebraic inductive limit of the sub-*-algebras $\bigotimes_{i=1}^n C^*_r(G_i,\Sigma_i)$, $n \in \Nz$. In $C^*_r(G,\Sigma)$, we also have a canonical dense sub-*-algebra given by the algebraic inductive limit of the sub-*-algebras $C^*_r(G^n,\Sigma^n)$, $n \in \Nz$, where $G^n = G_1 \times \dotso \times G_n$ and $\Sigma^n = \rukl{\Sigma_1 \times \dotso \times \Sigma_n} / \sim$ as above. Using Lemma \ref{Cartan Tensor Product}, the building blocks of these inductive limits can be identified, in a way compatible with the connecting homomorphisms of the inductive limits.
\eproof

In the remainder of this section, we would like to present a reformulation of the UCT problem for separable, nuclear C*-algebras that are $KK$-equivalent to their $M_{2^\infty}$-stabilization in terms of Cartan subalgebras and strongly approximately inner $\Zz_2$-actions of $\cO_2$. Recall from \cite[Definition 3.6]{Iz} that a $\Zz_2$-action $\beta$ on a unital $C^*$-algebra $B$ is called strongly approximately inner if there exist unitaries $u_n \in B$, $n \in \Nz$, such that $\beta(u_n) = u_n$ for all $n \in \Nz$ and $u_nbu_n^*$ converges to $\beta(b)$ for all $b \in B$. To the best of the authors' knowledge, it is not known whether all $\Zz_2$-actions on $\cO_2$ are strongly approximately inner, see also \cite[Remark~4.9 (2)]{Iz}.

We start by recalling Izumi's symmetry $\alpha: \Zz_2 \curvearrowright \cO_2$ from \cite[Lemma 4.7]{Iz}. Let $e \in \cO^{\mathrm{st}}_\infty$ be a projection with the property that $[e] \in K_0(\cO^{\mathrm{st}}_\infty) \cong \Zz$ is a generator. Set $v = 2e - 1$ and fix an isomorphism $\cO_2 \cong \bigotimes_{n \in \Nz} \cO_\infty^{\mathrm{st}}$. We define $\alpha$ to be the unique $\Zz_2$-action on $\cO_2$ satisfying $(\cO_2,\alpha) \cong (\bigotimes_{n \in \Nz} \cO_\infty^{\mathrm{st}}, \bigotimes_{n \in \Nz} \Ad(v))$. Observe that $\alpha$ is strongly approximately inner.

For an action $\beta:\Zz_2 \curvearrowright B$ on a unital C*-algebra, we write $e_\beta \in B \rtimes_ \beta \Zz_2$ for the projection $e_\beta := \frac{1 + t}{2}$, where $t \in B \rtimes_\beta \Zz_2$ is the canonical symmetry implementing $\beta$.

\begin{lemma} \label{e_alpha=1}
We have that $(K_0(\cO_2\rtimes_\alpha \Zz_2),[e_\alpha]) \cong (\Zz[\frac{1}{2}],1)$.
\end{lemma}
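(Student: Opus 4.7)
The approach is to realize $(\cO_2, \alpha)$ as an inductive limit of finite tensor products and reduce the computation of $K_0(\cO_2 \rtimes_\alpha \Zz_2)$ to a direct calculation at each finite stage. Set $B_n \defeq \bigotimes_{i=1}^n \cO_\infty^{\mathrm{st}}$ and $\alpha_n \defeq \bigotimes_{i=1}^n \Ad(v)$, so that $\cO_2 \rtimes_\alpha \Zz_2 \cong \varinjlim_n (B_n \rtimes_{\alpha_n} \Zz_2)$. Since $\alpha_n = \Ad(V_n)$ with $V_n \defeq v^{\otimes n}$ a self-adjoint unitary satisfying $V_n^2 = 1$, the element $V_n t \in B_n \rtimes_{\alpha_n} \Zz_2$ (with $t$ the canonical implementing unitary) is central, self-adjoint, and squares to $1$. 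Cutting with the spectral projections $e_\pm^{(n)} \defeq (1 \pm V_n t)/2$ yields an isomorphism $B_n \rtimes_{\alpha_n} \Zz_2 \cong B_n \oplus B_n$ under which $e_{\alpha_n} = (1+t)/2$ corresponds to $\bigl((1+V_n)/2,\, (1-V_n)/2\bigr)$.

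Next I would compute the $K_0$-classes $[(1 \pm V_n)/2]$ in $K_0(B_n)$. In Cuntz standard form, $K_0(\cO_\infty^{\mathrm{st}}) = \Zz \cdot [e]$ with $[1] = 0$, so $[1-e] = -[e]$; by Künneth, $K_0(B_n) = \Zz$ generated by $[e^{\otimes n}]$. Expanding $(1 \pm V_n)/2$ as a sum of tensor-product projections indexed by subsets of $\{1,\ldots,n\}$ of even (respectively odd) cardinality and applying $\sum_{k \text{ even}} \binom{n}{k} = 2^{n-1}$, I obtain $[(1 \pm V_n)/2] = \pm 2^{n-1}$. Therefore $[e_{\alpha_n}] = (2^{n-1}, -2^{n-1}) \in K_0(B_n) \oplus K_0(B_n) = \Zz \oplus \Zz$.

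Finally I identify the connecting map. Using $V_n \otimes 1 = V_{n+1}(1^{\otimes n} \otimes v)$ in $B_{n+1}$ (which follows from $v^2 = 1$), one checks that inside $B_{n+1} \rtimes_{\alpha_{n+1}} \Zz_2 \cong B_{n+1} \oplus B_{n+1}$ the element $V_n t$ corresponds to $(1 \otimes v,\, -1 \otimes v)$, so that $e_\pm^{(n)}$ correspond to $(1 \otimes e,\, 1 \otimes (1-e))$ and $(1 \otimes (1-e),\, 1 \otimes e)$ respectively. The inclusion therefore reads $(b,c) \mapsto (b \otimes e + c \otimes (1-e),\, b \otimes (1-e) + c \otimes e)$, which on $K_0$ becomes $(k,l) \mapsto (k-l,\, l-k)$. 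The image lies in $\Zz \cdot (1,-1)$ and the restriction of the map to this subgroup is multiplication by $2$; hence $K_0(\cO_2 \rtimes_\alpha \Zz_2) \cong \varinjlim(\Zz \xrightarrow{\times 2} \Zz \xrightarrow{\times 2} \dotsb) \cong \Zz[\tfrac{1}{2}]$, and the classes $[e_{\alpha_n}] = 2^{n-1}(1,-1)$ all represent $1 \in \Zz[\tfrac{1}{2}]$. The main delicate point is precisely this tracking of the spectral projections $e_\pm^{(n)}$ through the inclusion; the K-theoretic evaluation $[(1 \pm V_n)/2] = \pm 2^{n-1}$ then uses Künneth together with the standard-form condition $[1_{\cO_\infty^{\mathrm{st}}}] = 0$ in an essential way.
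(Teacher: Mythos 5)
Your proof is correct, and its core mechanism is the same as the paper's: since $\alpha_n=\Ad(V_n)$ is inner, the central symmetry $V_nt$ splits the crossed product as a direct sum, and the class of $e_{\alpha_n}$ is evaluated using the standard-form relation $[1-e]=-[e]$ in $K_0(\cO_\infty^{\mathrm{st}})$. The difference is one of completeness: the paper carries out this computation only at the first tensor factor, obtaining $[e_{\Ad(v)}]=\bigl[e\bigl(\tfrac{1+v\tilde t}{2}\bigr)\bigr]-\bigl[e\bigl(\tfrac{1-v\tilde t}{2}\bigr)\bigr]$, and then delegates both the isomorphism $K_0(\cO_2\rtimes_\alpha\Zz_2)\cong\Zz[\tfrac12]$ and the tracking of the unit class through the infinite tensor product to the proof of Izumi's Lemma~4.7. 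You instead make the whole inductive limit explicit: the computation $[(1\pm V_n)/2]=\pm 2^{n-1}$ (which I checked via the expansion of $v^{\otimes n}$ into eigenprojections and the binomial identity), the identification of the connecting map as $(k,l)\mapsto(k-l,l-k)$ on $\Zz^2$ (which I verified using $V_n\otimes 1=V_{n+1}(1^{\otimes n}\otimes v)$), and the resulting limit $\varinjlim(\Zz\xrightarrow{\times 2}\Zz)\cong\Zz[\tfrac12]$ with $[e_{\alpha_n}]=2^{n-1}(1,-1)\mapsto 1$. What your version buys is self-containedness --- it reproves $K_0(\cO_2\rtimes_\alpha\Zz_2)\cong\Zz[\tfrac12]$ rather than citing it --- at the cost of more bookkeeping; the delicate point you correctly flag, namely how the spectral projections $e_\pm^{(n)}$ sit inside the next stage, is exactly the step the paper avoids by invoking Izumi. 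I find no gap.
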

\begin{proof}
Izumi has shown in \cite[Lemma 4.7]{Iz} that $K_0(\cO_2\rtimes_\alpha \Zz_2) \cong \Zz[\frac{1}{2}]$. In fact, $\cO_2 \rtimes_\alpha \Zz_2 \cong \cO_\infty^{\mathrm{st}} \otimes M_{2^\infty}$. Let $e$ and $v$ be as above, and let $\tilde{t}\in \cO_\infty^{\mathrm{st}}\rtimes_{\Ad(v)} \Zz_2$ and $t \in \cO_2\rtimes_\alpha \Zz_2$ denote the canonical symmetries implementing $\Ad(v)$ and $\alpha$, respectively. Observe that $\frac{1 + v\tilde{t}}{2}$ and $\frac{1 - v\tilde{t}}{2}$ are central orthogonal projections in $\cO_\infty^{\mathrm{st}}\rtimes_{\Ad(v)} \Zz_2$ with the property that
$$
\begin{array}{c}
\cO_\infty^{\mathrm{st}} \rtimes_{\Ad(v)} \Zz_2 = \cO_\infty^{\mathrm{st}} \rtimes_{\Ad(v)} \Zz_2 \left( \frac{1 + v\tilde{t}}{2} \right) \oplus \cO_\infty^{\mathrm{st}} \rtimes_{\Ad(v)} \Zz_2 \left( \frac{1 - v\tilde{t}}{2} \right) \cong \cO_\infty^{\mathrm{st}} \oplus \cO_\infty^{\mathrm{st}}.
\end{array}
$$
From this we conclude that $K_0(\cO_\infty^{\mathrm{st}}\rtimes_{\Ad(v)} \Zz_2) \cong \Zz^2$ with generators $\left[e \left(  \frac{1 + v\tilde{t}}{2} \right)\right]$ and $\left[e \left(  \frac{1 - v\tilde{t}}{2} \right)\right]$.
We compute
$$
\begin{array}{lclll}
e_{\Ad(v)} & = & \frac{1}{2}\left( \frac{1 + v\tilde{t} + 1 - v\tilde{t}} {2} + v^2 \tilde{t}\right)
 & \hspace{-1.7cm} = & \hspace{-1.7cm} \frac{1}{2} \left( (1 + v) \left( \frac{1 + v\tilde{t}}{2} \right) + (1 - v) \left( \frac{1 - v\tilde{t}}{2} \right) \right) \\
  & = & \left( \frac{1 + v}{2} \right) \left( \frac{1 + v\tilde{t}}{2} \right) + \left( \frac{1 - v}{2} \right) \left( \frac{1 - v\tilde{t}}{2} \right)
 & = & e \left(\frac{1 + v\tilde{t}}{2} \right) + (1 - e)\left( \frac{1 - v\tilde{t}}{2} \right),
\end{array}
$$
and hence 
$$
\begin{array}{c}
\left[e_{\Ad(v)}\right] = \left[e \left(\frac{1 + v\tilde{t}}{2} \right)\right] - \left[e\left(\frac{1 - v\tilde{t}}{2}\right)\right]\in K_0(\cO_\infty^{\mathrm{st}}\rtimes_{\Ad(v)} \Zz_2).
\end{array}
$$
The canonical $*$-homomorphism $\cO_\infty^{\mathrm{st}}\rtimes_{\Ad(v)} \Zz_2 \to \cO_2 \rtimes_\alpha \Zz$ clearly maps $e_{\Ad(v)}$ to $e_\alpha$. It now follows from the proof of \cite[Lemma 4.7]{Iz} that $(K_0(\cO_2 \rtimes_\alpha \Zz_2), [e_\alpha]) \cong (\Zz[\frac{1}{2}], 1)$.
\end{proof}

Note that at least a priori, the construction of $\alpha$ depends on the choice of $e$. However, combining Lemma \ref{e_alpha=1} with Izumi's classification of outer strongly approximately inner $\Zz_2$-actions on $\cO_2$ (see \cite[Theorem 4.8]{Iz}), we conclude that any other choice would lead to an action that is conjugate to $\alpha$. We also get the following characterization of outer strongly approximately inner $\Zz_2$-actions on $\cO_2$ whose associated crossed products satisfy the UCT.

\bprop \label{all crossed products}
Let $\beta: \Zz_2 \curvearrowright \cO_2$ be an outer strongly approximately inner action with the property that $\cO_2 \rtimes_\beta \Zz_2$ satisfies the UCT. Then there exists a unital, $M_{2^\infty}$-absorbing UCT Kirchberg algebra $A$ and an isomorphism $A \otimes \cO_2 \cong \cO_2$ such that $(A \otimes \cO_2, \id_A \otimes \alpha) \cong (\cO_2, \beta)$.
\eprop
\bproof
Let $A$ be the unique unital UCT Kirchberg algebra with 
$$
(K_0(A),[1],K_1(A)) \cong (K_0(\cO_2\rtimes_\beta \Zz_2),[e_\beta],K_1(\cO_2\rtimes_\beta \Zz_2)),
$$
which exists by \cite[Theorem~3.6]{Ror} (uniqueness follows from Kirchberg-Phillips classification \cite{Kir,Phi}). The canonical isomorphism $(A \otimes \cO_2) \rtimes_{\id_A \otimes \alpha} \Zz_2 \stackrel{\cong}{\longrightarrow} A \otimes (\cO_2 \rtimes_\alpha \Zz_2)$ maps $e_{\id_A \otimes \alpha}$ to $1 \otimes e_\alpha$. By the K{\"u}nneth theorem and Lemma \ref{e_alpha=1}, the induced isomorphism satisfies
\[
\begin{array}{lcl}
(K_0((A \otimes \cO_2) \rtimes_{\id_A \otimes \alpha} \Zz_2),[e_{\id_A\otimes \alpha}]) & \cong & (K_0(A) \otimes K_0(\cO_2 \rtimes_\alpha \Zz_2),[1]\otimes [e_\alpha]) \\
 & \cong & (K_0(A) \otimes \Zz[\frac{1}{2}],[1]\otimes 1) \\
 & \cong & (K_0(\cO_2\rtimes_\beta \Zz_2) \otimes \Zz[\frac{1}{2}],[e_\beta]\otimes 1) \\
 & \cong & (K_0(\cO_2\rtimes_\beta \Zz_2),[e_\beta]).
\end{array}
\]
The last isomorphism comes from the fact that $\cO_2 \rtimes_\beta \Zz_2$ is $M_{2^\infty}$-absorbing, which in turn follows from \cite[Lemma~4.4]{Iz} and Kirchberg-Phillips classification. As both $(A \otimes \cO_2) \rtimes_{\id_A \otimes \alpha} \Zz_2$ and $\cO_2\rtimes_\beta \Zz_2$ are in Cuntz standard form (see \cite[proof of Lemma~4.4]{Iz}) and satisfy the UCT, Kirchberg-Phillips classification yields an isomorphism $\phi:(A \otimes \cO_2) \rtimes_{\id_A \otimes \alpha} \Zz_2 \stackrel{\cong}{\longrightarrow} \cO_2\rtimes_\beta \Zz_2$ with $K_0(\phi)([e_{\id_A\otimes \alpha}]) = [e_\beta]$. Hence, by \cite[Theorem 4.8 (1)]{Iz}, $\beta$ is conjugate to $\id_A\otimes \alpha$.
\eproof

We draw the following consequence, which, to the best of the authors' knowledge, is not documented in the literature.

\bcor
There exist outer strongly approximately inner $\Zz_2$-actions on $\cO_2$ that are cocycle conjugate but not conjugate.
\ecor
\bproof
Take unital, $M_{2^\infty}$-absorbing UCT Kirchberg algebras $A$ and $B$, which are $KK$-equivalent but not isomorphic (i.e. have different positions of the unit). Then it follows from the proof of Proposition \ref{all crossed products} and \cite[Theorem 4.8 (1)+(2)]{Iz} that $\id_A\otimes\alpha$ and $\id_B\otimes \alpha$ are cocycle conjugate but not conjugate.
\eproof

Let us come back to the $\Zz_2$-action $\alpha$ on $\cO_2$. We now show, by carefully choosing the projection $e$, that it fixes a Cartan subalgebra of $\cO_2$ pointwise.

\bprop \label{alpha Cartan fixed}
There exists a Cartan subalgebra $B \subseteq \cO_2$ such that $\alpha$ fixes $B$ pointwise. Moreover, $B$ can be chosen to be isomorphic to the algebra of continuous functions on the Cantor set.
\eprop
\bproof
Let $\cD_\infty = \mathrm{C}^*(S_\mu S^*_\mu, 1 ~:~ \mu \in W) \subseteq \cO_\infty$, where $W$ denotes the set of finite words in $\Nz$. Set $p := 1 - S_1S_1^*$ and $e := S_2S_2^*$ and note that $e,p \in \cD_\infty$. As $[p] = 0 \in K_0(\cO_\infty)$, $p\cO_\infty p \cong \cO^{\mathrm{st}}_\infty$. Of course, $e \leq p$ and therefore $e \in p\cO_\infty p$. Denote by $v := 2e - p \in p \cD_\infty p$. The natural map  $K_0(p\cO_\infty p) \to K_0(\cO_\infty)$ sends $[e]$ to $[e] = [1]\in K_0(\cO_\infty)$, and therefore $[e] \in K_0(p\cO_\infty p) \cong \Zz$ must be a generator. Thus, $\alpha$ is conjugate to $\bigotimes_{\Nz} \Ad(v): \Zz_2\curvearrowright  \bigotimes_{\Nz} p\cO_\infty p$. Moreover, as $v \in p\cD_\infty p$, $\Ad(v)$ fixes $p \cD_\infty p$ pointwise. It follows that the action $\bigotimes_{\Nz} \Ad(v)$ fixes $\bigotimes_{\Nz} p\cD_\infty p$ pointwise.  Note also that $(p\cO_\infty p, p\cD_\infty p)$ is a Cartan pair, as this is true for $(\cO_\infty, \cD_\infty)$, see \cite[\S~III.2]{Rbook}. The claim now follows from Lemma \ref{Cartan Infinite Tensor Product}.
\eproof

\btheo
\label{all automorphisms Cartan fixed}
Let $\beta:\Zz_2\curvearrowright \cO_2$ be an outer strongly approximately inner action. $\cO_2 \rtimes_\beta \Zz_2$ satisfies the UCT if and only if there exists a Cartan subalgebra $B \subseteq \cO_2$ which is fixed by $\beta$ pointwise.
\etheo
\bproof
Assume that $\cO_2 \rtimes_\beta \Zz_2$ satisfies the UCT. By Proposition \ref{alpha Cartan fixed}, $\alpha$ fixes some Cartan subalgebra $D \subseteq \cO_2$ pointwise.  By Proposition \ref{all crossed products}, there exists a unital, $M_{2^\infty}$-absorbing UCT Kirchberg algebra $A$ such that $\beta$ is conjugate to $\id_A\otimes \alpha$. As $A$ satisfies the UCT, it admits a Cartan subalgebra $C\subseteq A$, see Remark \ref{UCT <-> Kirchberg Cartan}. Then by Lemma \ref{Cartan Tensor Product}, $C\otimes D \subseteq A\otimes \cO_2$ is a Cartan subalgebra that gets fixed by $\id_A\otimes \alpha$ pointwise. This shows that there exists a Cartan subalgebra $B \subseteq \cO_2$ that is fixed by $\beta$ pointwise.

The reverse implication follows from Proposition~\ref{Gamma.B=B}.
\eproof

\btheo
The following statements are equivalent:
\begin{enumerate}
\item[(i)] Every separable, nuclear C*-algebra $A$ that is $KK$-equivalent to $A \otimes M_{2^\infty}$ satisfies the UCT.
\item[(ii)] For every unital, $M_{2^\infty}$-absorbing Kirchberg algebra $A$ in Cuntz standard form, there exists a strongly approximately inner action $\beta:\Zz_2 \curvearrowright \cO_2$ and a Cartan subalgebra $B \subseteq \cO_2$ such that $\cO_2 \rtimes_\beta \Zz_2 \cong A$ and $\beta$ fixes $B$ pointwise.
\item[(iii)] Every outer strongly approximately inner $\Zz_2$-action on $\cO_2$ fixes some Cartan subalgebra $B \subseteq \cO_2$ pointwise.
\item[(iv)] Every outer strongly approximately inner $\Zz_2$-action on $\cO_2$ fixes some Cartan subalgebra $B \subseteq \cO_2$ globally.
\end{enumerate}
\etheo
\bproof
Assume that (i) holds and $A$ is as in (ii). Then $A$ is a UCT Kirchberg algebra, and therefore admits a Cartan subalgebra by Remark \ref{UCT <-> Kirchberg Cartan}. As $A$ is in Cuntz standard form and absorbs $M_{2^\infty}$ tensorially, we conclude that $A \cong  (A \otimes \cO_2) \rtimes_{\id_A \otimes \alpha} \Zz_2$. Fix an isomorphism $\cO_2 \cong A \otimes \cO_2$, which exists by Kirchberg's absorption theorem \cite{KP}, and let $\beta$ denote the action corresponding to $\id_A \otimes \alpha$ under this identification. By Theorem~\ref{all automorphisms Cartan fixed}, $\beta$ then has the desired property and we conclude that (i) implies (ii).

Now assume (ii) and let $\beta$ be an outer strongly approximately inner $\Zz_2$-action on $\cO_2$. Then $\cO_2 \rtimes_\beta \Zz_2$ is a unital, $M_{2^\infty}$-absorbing Kirchberg algebra in Cuntz standard form. By (ii) and Proposition~\ref{Gamma.B=B}, $\cO_2 \rtimes_\beta \Zz_2$ therefore satisfies the UCT. Claim (iii) now follows from Theorem~\ref{all automorphisms Cartan fixed}.

The implication from (iii) to (iv) is trivial.

Lastly, assume (iv) and let $A$ be a separable, nuclear C*-algebra with the property that it is $KK$-equivalent to $A \otimes M_{2^\infty}$. Then $A$ is $KK$-equivalent to a unital, $M_{2^{\infty}}$-absorbing Kirchberg algebra in Cuntz standard form $A'$ by \cite[Theorem I]{Kir}. Fix an isomorphism $A' \otimes \cO_2 \cong \cO_2$ and let $\beta$ be the unique automorphism corresponding to $\id_{A'} \otimes \alpha$ under this isomorphism. Then $\beta$ is outer strongly approximately inner and satisfies
\[
\cO_2 \rtimes_\beta \Zz_2 \cong (A' \otimes \cO_2) \rtimes_{\id \otimes \alpha} \Zz_2 \cong A' \otimes \cO_\infty^{\mathrm{st}} \otimes M_{2^\infty} \cong A'.
\]
Thus, there exists an outer strongly approximately inner action $\beta$ on $\cO_2$ such that $A$ is $KK$-equivalent to $\cO_2 \rtimes_\beta \Zz_2$. Using Proposition~\ref{Gamma.B=B}, we therefore conclude that (i) holds. This concludes the proof.
\eproof


\begin{thebibliography}{99}

\bibitem{BS} S. \textsc{Barlak} and G. \textsc{Szab{\'o}}, \emph{Rokhlin actions of finite groups on UHF-absorbing C*-algebras}, Trans. Amer. Math. Soc.  \emph{369} (2017), 833--859.

\bibitem{Bla} B. \textsc{Blackadar}: \emph{K-theory for operator algebras}, Second edition, Cambridge University Press (1998).

\bibitem{CHS} R. \textsc{Conti}, J. H. \textsc{Hong}, and W. \textsc{Szyma{\'n}ski}, \emph{On conjugacy of maximal abelian subalgebras and the outer automorphism group of the Cuntz algebra}, Proc. Roy. Soc. Edinburgh Sect. A \emph{145} (2015), 269--279.

\bibitem{Cu} J. \textsc{Cuntz}, \emph{$K$-theory and C*-algebras}, Springer Lecture Notes in Mathematics \emph{1046} (1984), 55--79.

\bibitem{EW} E. \textsc{van Erp} and D. P. \textsc{Williams}, \emph{Groupoid crossed products of continuous-trace C*-algebras}, J. Op. Th. \emph{72} (2014), 557--576.

\bibitem{FD} J. M. G. \textsc{Fell} and R. S. \text{Doran}, \emph{Representations of *-Algebras, Locally Compact Groups, and Banach *-Algebraic Bundles. Vol. I. Basic Representation Theory of Groups and Algebras}, Pure and Applied Mathematics, vol. 125, Academic Press Inc., 1988.

\bibitem{FS} T. \textsc{Fack} and G. \textsc{Skandalis}, \emph{Connes' analogue of the Thom isomorphism for the Kasparov groups}, Invent. Math. \emph{64} (1981), 7--14.

\bibitem{Iz} M. \textsc{Izumi}, \emph{Finite group actions on C*-algebras with the Rohlin property, I}, Duke Math. J. \emph{122} (2004), 233--280.

\bibitem{Kat} T. \textsc{Katsura}, \emph{A class of C*-algebras generalizing both graph algebras and homeomorphism C*-algebras IV, pure infiniteness}, J. Funct. Anal. \emph{254} (2008), 1161--1187.

\bibitem{Kir} E. \textsc{Kirchberg}, \emph{The classification of Purely Infinite C*-algebras using Kasparov's Theory}, to appear in Fields Institute Communications series.

\bibitem{KP} E. \textsc{Kirchberg} and N. C. \textsc{Phillips}, \emph{Embedding of exact C*-algebras in the Cuntz algebra $\mathcal{O}_2$}, J. Reine Angew. Math. \emph{525} (2000), 17--53.

\bibitem{Ku1} A. \textsc{Kumjian}, \emph{On C*-diagonals}, Can. J. Math. \emph{38} (1986), 969--1008.

\bibitem{Ku2} A. \textsc{Kumjian}, \emph{Fell bundles over groupoids}, Proc. Amer. Math. Soc. \emph{126} (1998), 1115--1125.

\bibitem{LeGTh} P.-Y. \textsc{Le Gall}, \emph{Th{\'e}orie de Kasparov {\'e}quivariante et groupo\"{i}des }, PhD thesis, Universit{\'e} Paris VII (1994).

\bibitem{LeG} P.-Y. \textsc{Le Gall}, \emph{Th{\'e}orie de Kasparov {\'e}quivariante et groupo\"{i}des I}, K-Theory \emph{16} (1999), 361--390.

\bibitem{Ma} H. \textsc{Matui}, \emph{Topological full groups of one-sided shifts of finite type}, J. Reine Angew. Math. \emph{705} (2015), 35--84.

\bibitem{MW} P. S. \textsc{Muhly} and D. P. \textsc{Williams}, \emph{Renault's equivalence theorem for groupoid crossed products}, NYJM Monographs, vol. 3, State University of New York University at Albany, Albany, NY, 2008.

\bibitem{Phi} N. C. \textsc{Phillips}, \emph{A classification theorem for nuclear purely infinite simple C*-algebras}, Doc. Math. \emph{5} (2000), 49--114.

\bibitem{Rbook} J. \textsc{Renault}, \emph{A groupoid approach to C*-algebras}, Springer Lecture Notes in Mathematics \emph{793} (1980).

\bibitem{R} J. \textsc{Renault}, \emph{Cartan subalgebras in C*-algebras}, Irish Math. Soc. Bulletin \emph{61} (2008), 29--63.

\bibitem{RSWY} J. \textsc{Renault}, A. \textsc{Sims}, D. P. \textsc{Williams}, and T. \textsc{Yeend}, \emph{Uniqueness theorems for topological higher-rank graph C*-algebras}, to appear in Proc. Amer. Math. Soc., arXiv:0906.0829v3.

\bibitem{Ror} M. \textsc{R{\o}rdam}, \emph{Classification of certain infinite simple C*-algebras}, J. Funct. Anal. \emph{131} (1995), 415--458.

\bibitem{RSch} J. \textsc{Rosenberg} and C. \textsc{Schochet},
\emph{The K{\"u}nneth theorem and the universal coefficient theorem for Kasparov's generalized $K$-functor}, Duke Math. J. \emph{55} (1987), 431--474.

\bibitem{S} G. \textsc{Skandalis}, \emph{Une notion de nucl\'earit{\'e} en $K$-th{\'e}orie (d'apr{\`e}s J. Cuntz)}, K-Theory \emph{1} (1988), 549--573.

\bibitem{Ta} T. \textsc{Takeishi}, \emph{On nuclearity of Fell bundles over {\'e}tale groupoids}, Publ. RIMS \emph{50} (2014), 251--268.

\bibitem{Tu} J.-L. \textsc{Tu}, \emph{La conjecture de Baum-Connes pour les feuilletages moyennables}, K-Theory \emph{17} (1999), 215--264.

\bibitem{Tu2} J.-L. \textsc{Tu}, \emph{The Baum-Connes Conjecture for groupoids. C*-algebras} (M{\"u}nster, 1999), 227-–242, Springer, Berlin, 2000.

\bibitem{Y1} T. \textsc{Yeend}, \emph{Topological higher-rank graphs and the C*-algebras of topological 1-graphs}, Operator theory, operator algebras, and applications, Am. Math. Soc., Providence, RI, 2006, 231--244.

\bibitem{Y2} T. \textsc{Yeend}, \emph{Groupoid models for the C*-algebras of topological higher-rank graphs}, J. Op. Th. \emph{57} (2007), 95--120.

\end{thebibliography}
\end{document}